\newtheorem{thm}{Theorem}[section]
\newtheorem{defn}[thm]{Definition}
\newtheorem{prop}[thm]{Proposition}
\newtheorem{cor}[thm]{Corollary}
\newtheorem{lem}[thm]{Lemma}
\newtheorem{exmp}[thm]{Example}
\begin{document}

\title{Parking functions on  toppling matrices}
\author{
Jun Ma$^{a,}$\thanks{Email address of the corresponding author:
majun904@sjtu.edu.cn
\newline{\hspace*{1.8em}Partially supported by  SRFDP 20110073120068}}\\
 \and Yeong-Nan Yeh$^{b,}$\thanks{Partially supported by NSC 98-2115-M-001-010-MY3}
}\date{} \maketitle \vspace*{-1.2cm}\begin{center} \footnotesize
$^{a}$ Department of Mathematics, Shanghai Jiaotong University,
Shanghai\\
$^{b}$ Institute of Mathematics, Academia Sinica, Taipei\\

\end{center}

 \vspace*{-0.3cm}
\thispagestyle{empty}
\begin{abstract}
Let $\Delta$ be an integer $n \times n$-matrix  which satisfies the
 conditions:
$\det \Delta\neq 0$, $\Delta_{ij}\leq 0\text{ for }i\neq j,$ and
 there exists a vector ${\bf r}=(r_1,\ldots,r_n)>0$ such that
${\bf r}\Delta \geq 0$. Here the notation ${\bf r}> 0$ means that
$r_i>0$ for all $i$, and ${\bf r}\geq {\bf r}'$ means that $r_i\geq
r'_i$ for every $i$. Let $\mathscr{R}(\Delta)$ be the set of vectors
${\bf r}$ such that ${\bf r}>0$ and ${\bf r}\Delta\geq 0$. In this
paper, $(\Delta,{\bf r})$-parking functions are defined for any
${\bf r}\in\mathscr{R}(\Delta)$. It is proved that the set of
$(\Delta,{\bf r})$-parking functions is independent of ${\bf r}$ for
any ${\bf r}\in\mathscr{R}(\Delta)$. For this reason, $(\Delta,{\bf
r})$-parking functions  are simply called $\Delta$-parking
functions.  It is shown that the number of $\Delta$-parking
functions is less than or equal to  the determinant of $\Delta$.
Moreover,  the definition of $(\Delta,{\bf r})$-recurrent
configurations are given for any ${\bf r}\in\mathscr{R}(\Delta)$. It
is proved that the set of $(\Delta,{\bf r})$-recurrent
configurations is independent of ${\bf r}$ for any ${\bf
r}\in\mathscr{R}(\Delta)$. Hence,  $(\Delta,{\bf r})$-recurrent
configurations are simply called $\Delta$-recurrent configurations.
It is obtained that the number of $\Delta$-recurrent configurations
is larger than or equal to  the determinant of $\Delta$. A simple
bijection from $\Delta$-parking functions to $\Delta$-recurrent
configurations is established. It follows from this bijection that
the number of $\Delta$-parking functions and the number of
$\Delta$-recurrent configurations are both equal to the determinant
of $\Delta$.
\end{abstract}
%Keyword
\noindent {\bf MSC:} 05C30; 05C05\\
{\bf Keywords:} chip-firing game; parking function;  sandpile model;

\section{Introduction}
The classical parking functions are defined as follows.
There are $n$ parking spaces which are arranged in a line, numbered
$0$ to $n-1$ left to right and $n$ drivers labeled $1,\ldots,n$.
Each driver $i$ has an initial parking preference $a_i$. Drivers
enter the parking area in the order in which they are labeled. Each
driver proceeds to his preferred space and parks here if it is free,
or parks at the next unoccupied space to the right. If all the
drivers park successfully by this rule, then the sequence
$(a_1,\ldots,a_n)$ is called a parking function.

Konheim and Weiss \cite{konheim} introduced the conception of
 parking functions in the study of the linear probes of
random hashing function.  Riordan \cite{riordan} studied a relation
of parking problems to ballot problems. The most notable result
about parking functions is a bijection from the set of classical
parking functions of length $n$ to the set of labeled trees on $n+1$
vertices.

There are many generalizations of parking functions. Please refer to
\cite{cori,kung,pitman,stanley,yan1997,yan2001}. Postnikov and
Shapiro \cite{postnikov} introduced a new generalization, the
$G$-parking functions,  in the study of certain quotients of the
polynomial ring.
 Let $G$ be a
connected digraph with vertex set $V(G)=\{0,1,2,\ldots,n\}$ and edge
set $E(G)$. We allow $G$ to have multiple edges and loops. For any
$I\subseteq V(G)\setminus \{0\}$ and $v\in I$, define ${\rm
outdeg}_{I,G}(v)$ to be the number of edges from the vertex $v$ to a
vertex outside of the subset $I$ in $G$. $G$-parking functions are
defined as follows. \begin{itemize}\item A $G$-parking function is a
function $f:V(G)\setminus\{0\}\rightarrow \{0,1,2,\ldots\}$, such
that for every $I\subseteq V(G)\setminus\{0\}$ there exists a vertex
$v\in I$ such that $0\leq f(v)<{ outdeg}_{I,G}(v)$.
\end{itemize}
\noindent For the complete graph $G=K_{n+1}$ on $n+1$ vertices,
$K_{n+1}$-parking functions are exactly the classical parking
functions.

Chebikin and Pylyavskyy \cite{chebikin} gave a family of bijections
from the set of $G$-parking functions to the set of  the (oriented)
spanning trees of $G$. Let $L_G$ be the Laplace matrix that
corresponds to the connected digraph $G$ and $L_0$  the truncated
Laplace matrix obtained from the  matrix $L_G $  by deleting the
rows and columns indexed by $0$.  It follows from the matrix-tree
theorem that the number of $G$-parking functions is equal to $\det
L_0$.

One of the objective of the present paper is to generalize the
$G$-parking functions associated to an integer $n \times n$-matrix
$\Delta$ which satisfy the following
 conditions:
\begin{equation} \label{condition-matrix}  \begin{split}&\det
\Delta\neq 0;\\  &\Delta_{ij}\leq 0\text{ for }i\neq j;\\
&\text{there exists a vector }{\bf r}=(r_1,\ldots,r_n)>0\text{ such
that }{\bf r}\Delta \geq 0. \end{split}\end{equation} Here the
notation ${\bf r}> 0$ means that $r_i>0$ for all $i$ and ${\bf
r}\geq {\bf r}'$ means that $r_i\geq r'_i$ for every $i$.

 Let
$$\mathscr{R}(\Delta)=\{{\bf r}\in\mathbb{Z}^n\mid {\bf r}\Delta\geq
0\text{ and }{\bf r}>0\} $$ where $\mathbb{Z}$ is the set of
integers. For any ${\bf r}\in\mathscr{R}(\Delta)$, let
$${\bf c}={\bf c}({\bf r})=(c_1,\ldots,c_n)={\bf r}\Delta,
m=m({\bf r})=\sum\limits_{i=1}^nr_i.$$ Denote by $\Omega({\bf r})$
the set of integer vectors
$$\chi=(\chi(1),\cdots,\chi(n))$$ such that $$0\leq \chi(i)\leq
r_i\text{ for every }i\text{  and }\chi(i)\neq 0\text{ for some
}i.$$  Let $\Delta^j = (\Delta_{1j}, \ldots, \Delta_{nj})^T$ be the
$j$-th column of $\Delta$. There is a standard inner product
$\langle X,Y\rangle=\sum\limits_{i=1}^nX_iY_i$  on integer vectors
of length $n$. We define $(\Delta,{\bf r})$-parking functions as
follows:
\begin{defn}\label{definition} Let ${\bf r}\in\mathscr{R}(\Delta)$.  A $(\Delta,{\bf r})$-parking
function is a function $f:\{1,2,\ldots,n\}\rightarrow
\{0,1,2,\ldots\}$, such that for any $\chi\in \Omega({\bf r})$,
there exists a vertex $j$ with $\chi(j)\geq 1$ such that
$$0\leq f(j)<\langle\chi,\Delta^j\rangle.$$ Denote by $\mathcal {P}(\Delta,{\bf r})$ the set of $(\Delta,{\bf
r})$-parking functions.
%\begin{eqnarray*}\Leftrightarrow 0\leq f(j)&<&\frac{\chi(j)}{r_j}c_j-\sum\limits_{k=1}^n\left(\frac{\chi(j)}{r_j}-\frac{\chi(k)}{r_k}\right)r_k\Delta_{k,j}\\&=&\chi(j)\Delta_{j,j}+\sum\limits_{\chi(k)\neq0}\chi(k)\Delta_{k,j}\end{eqnarray*}
\end{defn}
\begin{exmp} Let us consider a connected digraph $G$ with vertex set $\{0,1,\ldots,n\}$. The transposed matrix of the truncated
Laplace matrix $L_0$ of $G$ satisfies the conditions in
(\ref{condition-matrix}) and the vector ${\bf
1}\in\mathscr{R}(L_0^T)$, where the notation ${\bf 1}$ denotes a row
vector of length $n$ in which  all coordinate have value $1$.
$(L_0^T,{\bf 1})$-parking functions are exactly $G$-parking
functions.\end{exmp}
\begin{exmp}\label{exam-parking} The  matrix $\Delta$ and the vector ${\bf r}$ are given as follows:$$\Delta=\begin{pmatrix}2&-1\\-3&4\end{pmatrix}, {\bf
r}=(2,1).$$ Then $${\bf c}={\bf r}\Delta=(1,2),m=3,$$ and
$$\mathcal {P}(\Delta,{\bf r})=\{(0,0),(0,1),(0,2),(1,0),(1,1)\}.$$
\end{exmp}

A very interesting result is obtained: the set  of $(\Delta,{\bf
r})$-parking functions is independent of ${\bf r}$ for any ${\bf
r}\in\mathscr{R}(\Delta)$. For this reason,  $(\Delta,{\bf
r})$-parking functions are  simply called  $\Delta$-parking
functions and denote by $\mathcal{P}(\Delta)$ the set of
$\Delta$-parking functions.

Let
$\langle\Delta\rangle=\mathbb{Z}\Delta_1\oplus\mathbb{Z}\Delta_2\oplus\cdots\oplus\mathbb{Z}\Delta_n$
be the sublattice in $\mathbb{Z}^n$  spanned by the vectors
$\Delta_i$, where $\Delta_i = (\Delta_{i1}, \ldots, \Delta_{in})$ be
the $i$-th row of $\Delta$. We define an equivalence relation $\sim$
on $\mathbb{Z}^n$ by declaring that $f \sim f'$ if and only if $f -
f' \in \langle\Delta\rangle.$ It is proved that distinct
$\Delta$-parking functions cannot be equivalent. Thus, every
equivalent class of $\mathbb{Z}^n$ contains at most one
$\Delta$-parking function. Since  the order of the quotient of the
integer  lattice $\mathbb{Z}^n/\langle \Delta\rangle$ is $\det
\Delta$, it follows that the number of $\Delta$-parking functions is
less than or equal to $\det \Delta$.

Now we turn to the abelian sandpile model, also known as the
chip-firing game. It was introduced by Dhar \cite{dhar} and was
studied by many authors.  Gabrielov \cite{gabrielov2} introduced the
sandpile model for a class of toppling matrices, which   is more
general than in \cite{dhar}. We state this model as follows.

An integer $n \times n$-matrix $\Delta$ is a toppling matrix if it
satisfies the following conditions:
\begin{equation*}\label{topplingcondition}\begin{split}&\Delta_{ij}\leq 0\text{ for }i\neq j;\\&\text{ there exists a
vector }{\bf h}>0\text{ such that }\Delta {\bf
h}>0.\end{split}\end{equation*} These matrices is called
avalanche-finite redistribution matrices in \cite{gabrielov2}.

We list some properties of toppling matrices as follows.
\begin{prop}\label{laplace-matrix-topple-property}{\rm(Gabrielov, \cite{gabrielov2})} \begin{enumerate}[(1)]
\item A matrix $\Delta$ is a toppling matrix if and only if
its transposed matrix $\Delta^T$ is a toppling matrix.

\item If $\Delta$ is a toppling matrix, then all principal minors of $\Delta$ are strictly positive.

\item  Every integer matrix $\Delta$ such that
\begin{equation*}\label{toppling-graph-condition}\Delta_{ij}\leq 0\text{ for
}i\neq j;~~\sum\limits_{j=1}^n\Delta_{ij}\geq 0 \text{ for all
}i;\text{ and }det \Delta\neq 0.\end{equation*} is a toppling
matrix.
\end{enumerate}
\end{prop}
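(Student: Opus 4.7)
The plan is to treat toppling matrices as a form of non-singular $M$-matrix and first establish a single master statement, namely that every toppling matrix $\Delta$ is invertible with $\Delta^{-1}\geq 0$ entrywise and $\det\Delta>0$. All three parts of the proposition then follow with little extra work, and I will derive them in the order (2), (1), (3).

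To prove the master statement I would split $\Delta=D-A$, where $D=\mathrm{diag}(\Delta_{11},\ldots,\Delta_{nn})$ and $A\geq 0$ is the matrix of negated off-diagonal entries. Evaluating $\Delta{\bf h}>0$ coordinate-wise and using $\Delta_{ij}h_j\leq 0$ for $j\neq i$ forces each $\Delta_{ii}>0$, so $D$ is invertible, and the non-negative matrix $B:=D^{-1}A$ satisfies the strict pointwise inequality $B{\bf h}<{\bf h}$. The crucial technical step is upgrading this to $\rho(B)<1$: by Perron--Frobenius there is a non-zero non-negative left eigenvector ${\bf u}$ with ${\bf u}^TB=\rho(B){\bf u}^T$, and pairing ${\bf u}$ against ${\bf h}-B{\bf h}>0$ gives $(1-\rho(B)){\bf u}^T{\bf h}>0$, whence $\rho(B)<1$ because ${\bf u}^T{\bf h}>0$ (since ${\bf u}\geq 0$, ${\bf u}\neq 0$, ${\bf h}>0$). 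The Neumann series then identifies $(I-B)^{-1}=\sum_{k\geq 0}B^k$ as a non-negative matrix, so $\Delta^{-1}=(I-B)^{-1}D^{-1}\geq 0$; and $\det(I-B)=\prod_i(1-\mu_i)$ is positive because real eigenvalues of $B$ contribute factors $1-\mu>0$ while complex conjugate pairs contribute $(1-\mu)(1-\bar\mu)=|1-\mu|^2>0$. Thus $\det\Delta=\det D\cdot\det(I-B)>0$.

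Part (2) now follows: for any $I\subseteq\{1,\ldots,n\}$ the restriction ${\bf h}|_I$ is a toppling vector for the principal submatrix $\Delta_I$, because $(\Delta_I{\bf h}|_I)_i=(\Delta{\bf h})_i-\sum_{j\notin I}\Delta_{ij}h_j>0$ for $i\in I$, the discarded sum being non-positive. Applying the master statement to $\Delta_I$ gives $\det\Delta_I>0$. Part (1): $(\Delta^T)^{-1}=(\Delta^{-1})^T\geq 0$ is invertible, so no row of $(\Delta^T)^{-1}$ can vanish and ${\bf h}':=(\Delta^T)^{-1}\mathbf{1}$ is strictly positive; then $\Delta^T{\bf h}'=\mathbf{1}>0$, making $\Delta^T$ a toppling matrix. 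Part (3): given a non-singular $Z$-matrix $\Delta$ with $\Delta\mathbf{1}\geq 0$, I would set ${\bf h}=\mathbf{1}+\varepsilon\,\Delta^{-1}\mathbf{1}$ for sufficiently small $\varepsilon>0$; continuity forces ${\bf h}>0$, while $\Delta{\bf h}=\Delta\mathbf{1}+\varepsilon\mathbf{1}\geq\varepsilon\mathbf{1}>0$ by construction.

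The main obstacle is the Perron--Frobenius step that delivers $\rho(B)<1$: iterating $B{\bf h}<{\bf h}$ gives only the weak bound $\rho(B)\leq 1$, and upgrading this to a strict inequality requires pairing a non-negative left Perron eigenvector with the strictly positive vector ${\bf h}-B{\bf h}$. Once $\rho(B)<1$ is in hand, the rest of the argument is essentially bookkeeping, and part (3) is then a one-line perturbation.
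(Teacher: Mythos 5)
Your proof is correct, but it follows a genuinely different route from the one the paper (in its draft section on the sandpile model) takes. You treat toppling matrices as non-singular $M$-matrices: decompose $\Delta=D-A$, use a left Perron eigenvector paired against the strictly positive vector ${\bf h}-B{\bf h}$ to get $\rho(D^{-1}A)<1$, and then read off $\Delta^{-1}\geq 0$ and $\det\Delta>0$ from the Neumann series; parts (1) and (3) become short consequences (the row sums of $(\Delta^{T})^{-1}\geq 0$ give a toppling vector for $\Delta^{T}$, and the perturbation ${\bf h}=\mathbf{1}+\varepsilon\Delta^{-1}\mathbf{1}$ handles (3)). The paper instead argues combinatorially: for (3) it realizes $\Delta$ as the truncated Laplacian of a digraph, invokes the matrix-tree theorem to get a spanning tree from $\det\Delta\neq 0$, and builds ${\bf h}$ from $h_i=1-\varepsilon^{\mathrm{dist}(i)}$; for (2) it applies the matrix-tree theorem to $\Delta\,\mathrm{diag}(h_1,\ldots,h_n)$ and its principal submatrices; and for (1) it runs a chip-firing argument, stabilizing the configuration $(\Delta_{11},\ldots,\Delta_{nn})$ so that the representation vector ${\bf r}>0$ of the toppling sequence satisfies $\Delta^{T}{\bf r}^{T}>0$. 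Your approach buys a stronger intermediate conclusion ($\Delta^{-1}\geq 0$, which the paper only obtains later via the adjugate) and is independent of graph-theoretic machinery, at the cost of importing Perron--Frobenius theory; the paper's route stays closer to the combinatorial objects (spanning trees, topplings) that the rest of the article is about. Your part (2) argument, restricting ${\bf h}$ to $I$ and noting the discarded sum is non-positive, is essentially identical to the paper's own Proposition on principal minors of toppling matrices.
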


For a toppling matrix $\Delta$,  let $\Delta_i = (\Delta_{i1},
\ldots, \Delta_{in})$ be the $i$-th row of $\Delta$. A row vector
${\bf u} = (u_1,\ldots,u_n)$ is called a configuration if $u_i\geq
0$ for all $i$. For any vertex $i$, if $u_i \geq \Delta_{ii}$, we
say that the vertex $i$ is critical. A configuration $u$ is called
stable if no vertex is critical, i.e., $0 \leq u_i < \Delta_{ii}$
for all vertices $i$. A critical vertex $i$ is toppled, that is a
subtraction the vector $\Delta_i$ from the vector $u$. Furthermore,
a sequence of topplings is a sequence of vertices
$i_1,i_2,\ldots,i_k$ such that $i_j$ is a critical vertex of
$u-\Delta_{i_1}-\cdots -\Delta_{i_{j-1}}$ for any $1\leq j\leq k$.
   A representation vector for the sequence of topplings is a vector
   ${\bf r}=(r_1,\ldots,r_n)$
   with $$r_s=|\{j\mid i_j=s,1\leq j\leq k\}|.$$ Clearly,
   ${\bf u}-\sum\limits_{j=1}^k\Delta_{i_j}={\bf u}-{\bf r}\Delta$.

\begin{prop}\label{topple-into-stable}{\rm (Dhar, \cite{dhar})}Every configuration can be transformed into a stable configuration
by a sequence of topplings. This stable configuration does not
depend on the order in which topplings are performed.
\end{prop}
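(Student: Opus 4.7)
The plan is to prove termination and uniqueness (the abelian property) separately; combining them yields the proposition.

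\emph{Termination.} I would exploit the existence of ${\bf h} > 0$ with $\Delta {\bf h} > 0$ coming from the definition of a toppling matrix. Any sequence of topplings from ${\bf u}$ with representation vector ${\bf r}$ produces the vector ${\bf u} - {\bf r}\Delta$, which must have nonnegative entries to be a configuration; thus ${\bf r}\Delta \leq {\bf u}$ componentwise. Right-multiplying by ${\bf h}$ yields ${\bf r}(\Delta {\bf h}) \leq {\bf u}\,{\bf h}$, and setting $\delta := \min_i (\Delta {\bf h})_i > 0$ gives $\delta \sum_i r_i \leq {\bf u}\,{\bf h}$. This uniformly bounds the length of any toppling sequence from ${\bf u}$, so a maximal one exists and, by maximality, ends in a stable configuration.

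\emph{Uniqueness of the stable outcome.} Let $\sigma = (i_1, \ldots, i_k)$ and $\tau$ be two legal toppling sequences from ${\bf u}$, both reaching stable configurations, with representation vectors ${\bf r}$ and ${\bf r}'$ respectively. It suffices to show ${\bf r} \leq {\bf r}'$ coordinatewise; by symmetry ${\bf r} = {\bf r}'$, forcing ${\bf u} - {\bf r}\Delta = {\bf u} - {\bf r}'\Delta$. I would prove, by induction on $s$, that the representation vector ${\bf r}^{(s)}$ of the length-$s$ prefix of $\sigma$ satisfies ${\bf r}^{(s)} \leq {\bf r}'$; the base case $s=0$ is trivial since ${\bf r}^{(0)} = 0$.

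The crux is the inductive step. Set $j := i_{s+1}$, so $j$ is critical in ${\bf u} - {\bf r}^{(s)}\Delta$, giving $({\bf u} - {\bf r}^{(s)}\Delta)_j \geq \Delta_{jj}$, while stability of ${\bf u} - {\bf r}'\Delta$ gives $({\bf u} - {\bf r}'\Delta)_j < \Delta_{jj}$. Suppose for contradiction $({\bf r}^{(s)})_j \geq ({\bf r}')_j$. Then
$$({\bf u} - {\bf r}'\Delta)_j - ({\bf u} - {\bf r}^{(s)}\Delta)_j = \sum_{t} \bigl(({\bf r}^{(s)})_t - ({\bf r}')_t\bigr)\,\Delta_{tj}.$$
For $t \neq j$, the inductive hypothesis gives $({\bf r}^{(s)})_t - ({\bf r}')_t \leq 0$, while $\Delta_{tj} \leq 0$ by the off-diagonal sign condition, so the term is $\geq 0$. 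For $t = j$, the contradiction hypothesis gives $({\bf r}^{(s)})_j - ({\bf r}')_j \geq 0$, and $\Delta_{jj} > 0$ by Proposition~\ref{laplace-matrix-topple-property}(2) applied to $1\times 1$ principal minors, so this term is also $\geq 0$. Hence $({\bf u} - {\bf r}'\Delta)_j \geq ({\bf u} - {\bf r}^{(s)}\Delta)_j \geq \Delta_{jj}$, contradicting stability. Therefore $({\bf r}^{(s)})_j < ({\bf r}')_j$, and ${\bf r}^{(s+1)} = {\bf r}^{(s)} + e_j \leq {\bf r}'$, completing the induction.

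The main obstacle is precisely this inductive comparison, where all three structural features of a toppling matrix are simultaneously in play: the nonpositivity of off-diagonal entries (to sign the $t \neq j$ terms), the strict positivity of diagonal entries (to sign the $t = j$ term), and the existence of ${\bf h}$ (to certify termination in the first step). Once the sign analysis is set up, the rest of the argument is mechanical.
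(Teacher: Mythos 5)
Your proof is correct, and while your termination argument is essentially the paper's (the paper uses the potential $\langle h,u\rangle$, which decreases by $\langle h,\Delta_i\rangle\geq\min_i\langle h,\Delta_i\rangle>0$ at each toppling; your bound $\delta\sum_i r_i\leq {\bf u}{\bf h}$ is the same estimate written in terms of the representation vector, and it does implicitly use the easy fact that legal topplings preserve nonnegativity so that ${\bf u}-{\bf r}\Delta\geq 0$), your uniqueness argument is genuinely different. The paper proves local confluence --- if $i$ and $j$ are both critical then $j$ remains critical after toppling $i$, so the two topplings commute --- and then concludes by ``using this argument repeatedly,'' i.e.\ an implicit appeal to Newman's diamond lemma together with termination. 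You instead prove a least-action / strong-abelian statement: by induction on prefixes, the representation vector of any legal stabilizing sequence is dominated coordinatewise by that of any other, hence all such sequences have the same representation vector and a fortiori the same endpoint. Your route buys a self-contained argument with no informal ``repeat'' step, and it yields the stronger conclusion that even the number of topplings at each site is order-independent (which the paper only recovers later via $\det\Delta\neq 0$); the cost is that it leans harder on the sign structure of $\Delta$ (off-diagonal entries $\leq 0$, diagonal entries $>0$, correctly justified via the $1\times 1$ principal minors) and on integrality of the representation vectors, whereas the paper's commutation argument is purely local and order-theoretic. Both are complete proofs.
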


 For any $1\leq i\leq n$, the operator
$A_i$ is given by increasing $u_i$ by $1$, and then performing a
sequence of topplings that lead to a new stable configuration. So
the avalanche operators $A_1, \cdots,A_n $   map the set of stable
configurations to itself. Dhar \cite{dhar} proved that  the
avalanche operators $A_1, \ldots,A_n $ commute pairwise.

A stable configuration ${\bf u}$ is called recurrent if there are
positive integers $c_i$ such that $A^{c_i}_iu = u$ for all $i$.
Dhar \cite{dhar} also showed that the number of recurrent
configurations equals $ \det \Delta.$

A configuration ${\bf u}$ is allowed if there exists $j \in I$ such
that $$u_j\geq \sum\limits_{i\in I\setminus \{j\}}(-\Delta_{i,j})$$
for any nonempty subset $I$ of vertices. Dhar obtained a more
explicit characterization of  recurrent configurations:
 \begin{itemize}\item Every recurrent configuration is allowed.
\end{itemize}
Dhar suggested that  a configuration is recurrent if and only if it
is stable and allowed. Gabrielov \cite{gabrielov1} found that this
statement is not true in general, and proved the conjecture for a
toppling matrix $\Delta$ which has nonnegative column sums.  For
symmetric $\Delta = L_0$, Dhar's conjecture was proved in
\cite{cori2,ivashkevich,meester}, where $G$ is an undirected graph
and $L_0$ is the truncated Laplace matrix of $G$. Postnikov and
Shapiro \cite{postnikov} gave a bijection from $G$-parking functions
to recurrent configurations for the toppling matrix $\Delta=L_0$.

 Let $\Delta$ be an integer $n
\times n$-matrix  and satisfy the condition in
(\ref{condition-matrix}). Another objective of the present paper is
to show how $\Delta$-parking functions  are related to the sandpile
model. First, we show that an integer matrix $\Delta$  is a toppling
matrix if and only if it satisfies the conditions in
(\ref{condition-matrix}). Then for any ${\bf
r}\in\mathscr{R}(\Delta)$ we define $(\Delta,{\bf r})$-recurrent
configurations as follows.
\begin{defn}\label{r-recurrent} Let ${\bf u}$  be a configuration and ${\bf r}\in\mathscr{R}(\Delta)$. We say
that ${\bf u}$ is a $(\Delta,{\bf r})$-recurrent configuration if
${\bf u}$ is stable and the configuration ${\bf u}+{\bf r}\Delta$
can be transformed into
 ${\bf u}$
by a sequence of topplings. Denote by $\mathcal{R}(\Delta,{\bf r})$
the set of $(\Delta,{\bf r})$-recurrent configurations.
\end{defn}
\begin{exmp}\label{exam-recurrent} The  matrix $\Delta$ and the vector ${\bf r}$ are given as those in Example \ref{exam-parking}.
Then
$$\mathcal {R}(\Delta,{\bf r})=\{(1,3),(1,2),(1,1),(0,3),(0,2)\}.$$
\end{exmp}
Let $${\bf d}={\bf
d}(\Delta)=(\Delta_{11}-1,\Delta_{22}-1,\ldots,\Delta_{nn}-1).$$ For
any ${\bf r}\in\mathscr{R}(\Delta)$, we prove that a configuration
${\bf u}$ is a $(\Delta,{\bf r})$-recurrent configuration if and
only if ${\bf d}-{\bf u}$ is a $(\Delta,{\bf r})$-parking function.
This gives a bijection from $(\Delta,{\bf r})$-recurrent
configurations to $(\Delta,{\bf r})$-parking functions and implies
that the set of $(\Delta,{\bf r})$-recurrent configurations is
independent of ${\bf r}$ for any ${\bf r}\in\mathscr{R}(\Delta)$.
Hence, $(\Delta,{\bf r})$-recurrent configurations are  simply
called $\Delta$-recurrent configurations and denote by
$\mathcal{R}(\Delta)$ the set of $\Delta$-parking functions. We also
show that every equivalent class of $\mathbb{Z}^n$ contains at least
one $\Delta$-recurrent configuration. So  the number of
$\Delta$-recurrent configurations is larger than or equal to $\det
\Delta$. Combining the results about $\Delta$-parking functions, we
obtain the number of $\Delta$-parking functions and the number of
$\Delta$-recurrent configurations are both equal to $\det \Delta$.

Note that recurrent configurations for a toppling matrix $\Delta$
are exactly $\Delta$-recurrent configurations. Thus, with the
benefit of the bijection from $(\Delta,{\bf r})$-recurrent
configurations to $(\Delta,{\bf r})$-parking functions, we give
explicit characterization of recurrent configurations in the
sandpile model.

The rest of this paper is organized as follows.  In Section $2$, we
study $\Delta$-parking functions. In Section $3$, we study
$\Delta$-recurrent configurations.

\section{$\Delta$-parking functions}
In this section, we always let $\Delta= (\Delta_{ij})_{1\leq i,j\leq
n}$ be an integer $n \times n$-matrix and satisfy the
 conditions in  (\ref{condition-matrix}).  For any ${\bf r}\in\mathscr{R}(\Delta)$,
denote by  $\tilde{\Delta}=\tilde{\Delta}({\bf r})=(\tilde{\Delta}_{i,j})_{1\leq i,j\leq n}$ the following matrix $$\tilde{\Delta}=\tilde{\Delta}({\bf r})=\begin{pmatrix}r_1&0&\cdots&0\\
0&r_2&\ldots&0\\
\ldots&\ldots&\ldots&\ldots\\
0&0&\ldots&r_n\end{pmatrix}\Delta=\begin{pmatrix}r_1{\Delta}_{11}&r_1{\Delta}_{12}&\ldots&r_1{\Delta}_{1n}\\
r_2{\Delta}_{21}&r_2{\Delta}_{22}&\ldots&r_2{\Delta}_{2n}\\
\ldots&\ldots&\ldots&\ldots\\
r_n{\Delta}_{n1}&r_n{\Delta}_{n2}&\ldots&r_n{\Delta}_{nn}\end{pmatrix}.$$
Note that the column sums of $\tilde{\Delta}({\bf r})$ are
nonnegative since ${\bf r}\Delta\geq 0$, i.e.,
$\sum\limits_{i=1}^n\tilde{\Delta}_{i,j}\geq 0\text{ for every }j.$
We construct a digraph $D=D(\Delta,{\bf r})$ with vertex set $[0,n]$
as follows:
\begin{enumerate}[(a)]
\item For any $1\leq i,j\leq n$ and $i\neq j$,  we connect $i$ to $j$
by $-\tilde{\Delta}_{i,j}$ edges directed from $j$ to $i$; \item For
every $j\in\{1,2,\ldots,n\}$, we connect $j$ to $0$ by
$\sum\limits_{i=1}^n\tilde{\Delta}_{i,j}(\geq 0)$ edges directed
from $j$ to $0$.\end{enumerate}

Let ${\bf u} = (u_1,\ldots,u_n)$ is a vector of length $n$ and
$A=(a_{ij})$ an $n\times n$ matrix over a set $\{1,2,\ldots,n\}$.
For each nonempty subset $I\subseteq \{1,2,\ldots,n\}$, we denote by
$A[I]$ the submatrix of $A$ obtained by deleting the rows and
columns whose indices are in $\{1,2,\ldots,n\}\setminus I$ and by
${\bf u}[I]$ the vector obtained from ${\bf u}$ by deleting the
entries whose indices are in $\{1,2,\ldots,n\}\setminus I$.

\begin{lem}\label{det>=0}Let $\Delta= (\Delta_{ij})_{1\leq i,j\leq
n}$ be an integer $n \times n$-matrix and satisfy the  conditions:
$$\Delta_{ij}\leq 0\text{ for }i\neq j  \text{ and there exists a
row vector }{\bf r}>0\text{ such that }{\bf r}\Delta \geq 0.$$
 Then  the submatrix $\Delta[I]$ of $\Delta$ also satisfies the conditions above and $\det \Delta[I]\geq 0$  for each nonempty subset $I\subseteq
\{1,2,\ldots,n\}$.
\end{lem}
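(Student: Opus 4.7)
The plan has two parts, matching the two assertions of the lemma. For the structural part, the off-diagonal entries of $\Delta[I]$ are unchanged, so they remain nonpositive. I would take the restriction $\mathbf{r}[I]$ as the required positive vector. To verify $\mathbf{r}[I]\Delta[I] \geq 0$, I would write, for each $j \in I$,
\[
(\mathbf{r}[I]\Delta[I])_j \;=\; (\mathbf{r}\Delta)_j - \sum_{i \notin I} r_i \Delta_{ij},
\]
and observe that every $i \notin I$ in the second sum satisfies $i \neq j$, so $\Delta_{ij} \leq 0$; hence the left-hand side is $\geq (\mathbf{r}\Delta)_j \geq 0$.

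The substantive assertion is $\det \Delta[I] \geq 0$. The obstacle to applying Proposition~\ref{laplace-matrix-topple-property} directly is that one has only the weak inequality $\mathbf{r}\Delta \geq 0$, whereas the toppling-matrix condition (after transposing, via part~(1) of that proposition) requires strict positivity. My plan is to bridge this gap by a discrete perturbation that remains inside the class of integer matrices. For each positive integer $k$, introduce
\[
\Delta_k \;:=\; k\Delta + E_n,
\]
where $E_n$ denotes the $n\times n$ identity matrix. Its off-diagonal entries are $k\Delta_{ij} \leq 0$, and $\mathbf{r}\Delta_k = k(\mathbf{r}\Delta) + \mathbf{r} \geq \mathbf{r} > 0$ strictly; so $\Delta_k^T$ is a toppling matrix (witnessed by the column vector $\mathbf{r}^T$), and hence $\Delta_k$ itself is a toppling matrix by Proposition~\ref{laplace-matrix-topple-property}(1).

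Applying Proposition~\ref{laplace-matrix-topple-property}(2) then forces every principal minor of $\Delta_k$ to be strictly positive. Specialising to the subset $I$ and using $\Delta_k[I] = k\Delta[I] + E_{|I|}$, I obtain
\[
k^{|I|}\det\!\bigl(\Delta[I] + k^{-1}E_{|I|}\bigr) \;=\; \det\!\bigl(k\Delta[I] + E_{|I|}\bigr) \;>\; 0
\]
for every $k \geq 1$. Since $k^{|I|} > 0$, the factor $\det(\Delta[I] + k^{-1}E_{|I|})$ is itself positive; regarded as a polynomial in $k^{-1}$, its value at $k^{-1} = 0$ equals $\det \Delta[I]$, so letting $k \to \infty$ and invoking continuity of the determinant yields $\det \Delta[I] \geq 0$. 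The only point of any depth is the move from the weak hypothesis $\mathbf{r}\Delta \geq 0$ to a strict inequality suitable for Proposition~\ref{laplace-matrix-topple-property}; once the integer perturbation $k\Delta + E_n$ is in hand, the rest is a routine continuity argument.
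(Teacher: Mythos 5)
Your proof is correct, and its second half takes a genuinely different route from the paper's. The structural part (nonpositive off-diagonal entries of $\Delta[I]$, and $\mathbf{r}[I]\Delta[I]\geq 0$ via discarding the nonpositive contribution $\sum_{i\notin I}r_i\Delta_{ij}$) is exactly the paper's argument. For $\det\Delta[I]\geq 0$, however, the paper stays combinatorial: it forms $\tilde{\Delta}(\mathbf{r})[I]$, whose column sums are nonnegative by the part just proved, builds the digraph $D(\Delta[I],\mathbf{r}[I])$ with an added sink $0$, and reads off $\det\tilde{\Delta}[I]=\prod_{i\in I}r_i\det\Delta[I]$ as a count of spanning trees rooted at $0$ via the matrix-tree theorem, so the determinant is nonnegative because it counts something. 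You instead perturb to $\Delta_k=k\Delta+E_n$, note that $\mathbf{r}\Delta_k\geq\mathbf{r}>0$ makes $\Delta_k^T$ (hence $\Delta_k$, by Proposition \ref{laplace-matrix-topple-property}(1)) a toppling matrix, invoke Proposition \ref{laplace-matrix-topple-property}(2) for strict positivity of all principal minors of $\Delta_k$, and pass to the limit $k\to\infty$ after rescaling; each step checks out, including the identity $\Delta_k[I]=k\Delta[I]+E_{|I|}$ and the continuity argument that degrades the strict inequality to $\det\Delta[I]\geq 0$. What the paper's route buys is self-containment (only the matrix-tree theorem is needed, and the same digraph $D(\Delta,\mathbf{r})$ is reused immediately afterwards for the strict inequality $\det\Delta[I]>0$ via the matrix-forest theorem); what yours buys is that it bypasses the graph construction entirely by leaning on Gabrielov's cited facts, at the cost of a limiting argument that inherently loses strictness — which is harmless here, since the lemma only claims $\geq 0$.
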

\begin{proof} Let ${\bf
r}=(r_1,\ldots,r_n)>0$ be a row vector  such that  ${\bf r}\Delta
\geq 0$. Fix a nonempty subset $I\subseteq \{1,2,\ldots,n\}$ and let
$J=\{1,2,\ldots,n\}\setminus I$. For any $i\in I$, we have $${\bf
r}[J]\Delta^i[J]\leq 0$$ where $\Delta^i$ is the $i$-th column of
$\Delta$. Hence, $${\bf r}[I]\Delta^i[I]={\bf r}\Delta^i-{\bf
r}[J]\Delta^i[J]\geq {\bf r}\Delta^i\geq 0 $$ and $${\bf
r}[I]\Delta[I]\geq 0$$

Let us consider the matrix $\tilde{\Delta}=\tilde{\Delta}({\bf
r})[I]$ and the graph $D=D(\Delta[I],{\bf r}[I])$. By the
matrix-tree theorem, the number of sink spanning trees rooted at $0$
in $D$ is
$$\det \tilde{\Delta}=\prod\limits_{i\in I}r_i\det \Delta[I].$$ Hence, we must
have
$$\det \Delta[I]\geq 0.$$
\end{proof}
\begin{prop}Let $\Delta= (\Delta_{ij})_{1\leq i,j\leq
n}$ be an integer $n \times n$-matrix. Then $\Delta$ satisfies the
conditions in  (\ref{condition-matrix}) if and only if  the matrix
$\Delta[I]$ satisfies the conditions in  (\ref{condition-matrix})
for each nonempty subset $I\subseteq \{1,2,\cdots,n\}$.
\end{prop}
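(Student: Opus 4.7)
The ``if'' direction is immediate: take $I=\{1,\ldots,n\}$. For the ``only if'' direction, two of the three conditions in (\ref{condition-matrix}) for $\Delta[I]$ come for free. The sign condition $\Delta[I]_{ij}\le 0$ for $i\ne j$ is inherited from $\Delta$, and the proof of Lemma~\ref{det>=0} already supplies the vector ${\bf r}[I]>0$ with ${\bf r}[I]\Delta[I]\ge 0$ and shows $\det\Delta[I]\ge 0$. The entire difficulty, and the step I expect to be the main obstacle, is to upgrade this to $\det\Delta[I]\ne 0$ using the as-yet-unused hypothesis $\det\Delta\ne 0$.

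My plan is to argue by contradiction. Suppose $\det\Delta[I]=0$. The matrix-tree identity used in the proof of Lemma~\ref{det>=0} then forces $D(\Delta[I],{\bf r}[I])$ to have no in-arborescence rooted at $0$, so the set
$$K=\{v\in I:v\text{ has no directed path to }0\text{ in }D(\Delta[I],{\bf r}[I])\}$$
is non-empty, and every out-neighbor in $D$ of a vertex of $K$ again lies in $K$. Unpacking the edge-construction of $D$ from this closure property, for each $j\in K$ I get $\Delta_{ij}=0$ for all $i\in I\setminus K$ (no edges $j\to i$) and $\sum_{i\in I}r_i\Delta_{ij}=0$ (no edge $j\to 0$).

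The next step is to push these zeros out to all of $\{1,\ldots,n\}\setminus K$ using the hypothesis ${\bf r}\Delta\ge 0$. Splitting the inequality $\sum_{i=1}^n r_i\Delta_{ij}\ge 0$ for $j\in K$ over $K$, $I\setminus K$ and $J=\{1,\ldots,n\}\setminus I$, the first two contributions are already zero, so $\sum_{i\in J}r_i\Delta_{ij}\ge 0$. Each summand is non-positive ($i\ne j$, $\Delta_{ij}\le 0$, $r_i>0$), which forces $\Delta_{ij}=0$ for every $i\in J$ as well. Hence $\Delta_{ij}=0$ for every $j\in K$ and every $i\notin K$; reordering so that $K$ indexes the first rows and columns makes $\Delta$ block lower-triangular with diagonal blocks $\Delta[K]$ and $\Delta[\{1,\ldots,n\}\setminus K]$.

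To conclude, I will observe that combining $\sum_{i\in I}r_i\Delta_{ij}=0$ for $j\in K$ with the vanishing of $\Delta_{ij}$ for $i\in I\setminus K$, $j\in K$ yields ${\bf r}[K]\Delta[K]=0$ as a row vector. Since ${\bf r}[K]>0$ is non-zero, $\Delta[K]$ is singular, so $\det\Delta[K]=0$, and the block structure then gives $\det\Delta=\det\Delta[K]\cdot\det\Delta[\{1,\ldots,n\}\setminus K]=0$, contradicting $\det\Delta\ne 0$.
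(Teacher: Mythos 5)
Your argument is correct, and the ``if'' direction and the reduction to showing $\det\Delta[I]\neq 0$ match the paper's setup exactly; the difference lies entirely in how that last nonvanishing is obtained. The paper stays inside the matrix-tree framework: having shown via the matrix-tree theorem that $\det\Delta>0$, so that $D(\Delta,{\bf r})$ has a spanning in-tree rooted at $0$, it asserts that $D$ therefore has a spanning forest rooted at $J\cup\{0\}$ with $J=\{1,\ldots,n\}\setminus I$, and the matrix-forest theorem then gives $\det\tilde{\Delta}[I]=\prod_{i\in I}r_i\det\Delta[I]>0$ in one stroke. You instead argue by contradiction: if $\det\Delta[I]=0$, the tree count of Lemma~\ref{det>=0} vanishes, so the set $K$ of vertices of $I$ with no directed path to the sink is nonempty and closed under out-neighbours; unwinding the edge construction of $D(\Delta[I],{\bf r}[I])$ together with ${\bf r}\Delta\geq 0$ then forces $\Delta_{ij}=0$ for all $j\in K$ and $i\notin K$ and ${\bf r}[K]\Delta[K]=0$, so $\Delta[K]$ is singular and the block-triangular structure (block \emph{upper}-triangular in your ordering, though this does not matter for the determinant factorization) yields $\det\Delta=0$, a contradiction. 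Your route is more self-contained: it uses only the plain matrix-tree theorem already needed for Lemma~\ref{det>=0} and replaces the all-minors/forest version with an elementary reachability and linear-algebra argument, at the cost of a longer case analysis; the paper's route is shorter but rests on the asserted (and not proved there) step that a positive spanning-tree count forces a positive count of spanning forests rooted at $J\cup\{0\}$. Both proofs are sound.
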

\begin{proof}
Suppose that $\Delta$ satisfies the conditions in
(\ref{condition-matrix}). Let us consider the matrix
$\tilde{\Delta}=\tilde{\Delta}({\bf r})$ and the graph
$D=D(\Delta,{\bf r})$. By the matrix-tree theorem, the number of
sink spanning trees rooted at $0$ in $D$ is $$\det
\tilde{\Delta}=r_1 \cdots r_n\det \Delta.$$ Hence, we must have
$$\det \Delta>0$$ since $\det \Delta\neq 0$. This also implies that the number of sink spanning
forests rooted at $J$ in $D$ is larger than $0$, where
${J}=\{1,2,\ldots,n\}\setminus I$. By the matrix-forest theorem, we
have
$$\det \tilde{\Delta}[I]=\prod\limits_{i\in I}r_i\det
\Delta[I]>0\text{ and }\det \Delta[I]>0.$$ Hence, the matrix
$\Delta[I]$ satisfy the conditions in  (\ref{condition-matrix}) by
Lemma \ref{det>=0}.
\end{proof}
\begin{cor} Let $\Delta= (\Delta_{ij})_{1\leq i,j\leq
n}$ be an integer $n \times n$-matrix and satisfy the conditions in
 (\ref{condition-matrix}). Then $$\det \Delta[I]>0$$  for each nonempty subset $I\subseteq
\{1,2,\ldots,n\}$.
\end{cor}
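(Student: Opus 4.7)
The plan is to observe that this corollary is essentially an immediate byproduct of the Proposition just established, and no new work is required beyond repackaging. Since $\Delta$ satisfies the conditions in (\ref{condition-matrix}), the Proposition asserts that $\Delta[I]$ also satisfies (\ref{condition-matrix}) for every nonempty $I\subseteq\{1,2,\ldots,n\}$; in particular $\det\Delta[I]\neq 0$. On the other hand, Lemma \ref{det>=0} applies to $\Delta[I]$ (whose off-diagonal entries are still nonpositive, and for which ${\bf r}[I]>0$ with ${\bf r}[I]\Delta[I]\geq 0$ was verified in the proof of that lemma), yielding $\det\Delta[I]\geq 0$. Combining $\det\Delta[I]\neq 0$ and $\det\Delta[I]\geq 0$ gives $\det\Delta[I]>0$.

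Alternatively, and perhaps more cleanly, I would simply point to the chain of equalities $\det\tilde{\Delta}[I]=\prod_{i\in I}r_i\cdot\det\Delta[I]$ already extracted inside the proof of the Proposition via the matrix-forest theorem on the digraph $D(\Delta,{\bf r})$. The number of spanning forests of $D(\Delta,{\bf r})$ rooted at $\{0\}\cup(\{1,\ldots,n\}\setminus I)$ is strictly positive (one can exhibit such a forest by, for each $j\in I$, taking a directed path in $D$ from $j$ to the sink $\{0\}\cup(\{1,\ldots,n\}\setminus I)$; such paths exist since $\Delta$ being a toppling matrix makes $D$ have $0$ reachable from every vertex). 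Since the $r_i$ are positive integers, $\det\Delta[I]>0$ follows.

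The step that deserves a sentence of care, rather than being any real obstacle, is the verification that $\Delta[I]$ inherits the property ``there exists ${\bf s}>0$ with ${\bf s}\Delta[I]\geq 0$'' — but this is exactly the inequality ${\bf r}[I]\Delta[I]\geq 0$ derived in the proof of Lemma \ref{det>=0} from ${\bf r}[J]\Delta^i[J]\leq 0$ (using that off-diagonal entries of $\Delta$ are nonpositive and $r_i>0$). With this reduction in hand the corollary is a one-line consequence, so I would write it as essentially: ``By the Proposition, $\Delta[I]$ satisfies (\ref{condition-matrix}), so $\det\Delta[I]\neq 0$; by Lemma \ref{det>=0}, $\det\Delta[I]\geq 0$; hence $\det\Delta[I]>0$.''
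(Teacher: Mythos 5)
Your proposal is correct and matches the paper's treatment: the paper states this corollary without a separate proof precisely because the proof of the preceding Proposition already derives $\det\tilde{\Delta}[I]=\prod_{i\in I}r_i\det\Delta[I]>0$ via the matrix-forest theorem, which is your second route, and your first route (combining the Proposition's conclusion $\det\Delta[I]\neq 0$ with Lemma \ref{det>=0}'s $\det\Delta[I]\geq 0$) is an equally valid one-line repackaging of the same ingredients. No gaps.
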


Denote by $V({\bf r})$ a multiset with $r_i$ copies of $i$ for every
$i\in\{1,2,\cdots,n\}$. For any $\chi\in\Omega({\bf r})$ we can
obtain a submultiset of $V({\bf r})$ by giving $\chi(i)$ copies of
$i$ for every $i\in\{1,2,\cdots,n\}$. Thus, we call $\chi$ the
characteristic function of $W$. Conversely, for any submultiset $W$
of $V({\bf r})$, let
 $\chi(i)$ be the occurrence number of sites $i$ in $W$ for every
 $i\in\{1,2,\cdots,n\}$. Then $\chi\in \Omega({\bf
r})$.

%Iterate Steps $2$ until $i=m$.

\begin{lem}\label{parking-sequence}For any ${\bf r}\in\mathscr{R}(\Delta)$, let $m=m({\bf r})=\sum\limits_{i=1}^nr_i$. Then $f$ is a  $(\Delta,{\bf r})$-parking
function  if and only if there is a sequence of vertices in the
multiset $V({\bf r})$
$$\pi(1),\ldots,\pi(m)$$ such that for every $i\in\{1,2,\cdots,m\}$, $$0\leq f(\pi(i))<\langle\chi_i,\Delta^{\pi(i)}\rangle$$
where   $\chi_i$ is the characteristic function of the multiset
$\{\pi(i),\pi(i+1),\ldots,\pi(m)\}.$
\end{lem}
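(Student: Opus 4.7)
The plan is to prove both directions of the equivalence by relating the vectors $\chi_i$ appearing along the sequence to an arbitrary $\chi \in \Omega(\mathbf{r})$ used in the parking condition.

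For the forward direction ($\Rightarrow$), I will build the sequence $\pi(1), \ldots, \pi(m)$ by induction. Setting $\chi_1 := \mathbf{r} \in \Omega(\mathbf{r})$, the parking condition supplies some $j$ with $\chi_1(j) \geq 1$ and $0 \leq f(j) < \langle \chi_1, \Delta^j \rangle$, and I take $\pi(1) := j$. Inductively, once $\pi(1), \ldots, \pi(i-1)$ have been chosen for some $i \leq m$, define $\chi_i := \mathbf{r} - \sum_{k<i} e_{\pi(k)}$. Then $0 \leq \chi_i \leq \mathbf{r}$ and $\chi_i$ has total mass $m-i+1 \geq 1$, so $\chi_i \in \Omega(\mathbf{r})$; applying the parking condition at $\chi_i$ furnishes $\pi(i)$.

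For the backward direction ($\Leftarrow$), assume the sequence exists and fix $\chi \in \Omega(\mathbf{r})$. Let $i^\ast$ be the largest index with $\chi_{i^\ast} \geq \chi$ componentwise; this is well defined since $\chi_1 = \mathbf{r} \geq \chi$. Maximality forces $\chi$ and $\chi_{i^\ast}$ to agree at the coordinate $j := \pi(i^\ast)$ (otherwise the sequence could be advanced one more step while preserving $\chi_{i^\ast + 1} \geq \chi$), so in particular $\chi(j) = \chi_{i^\ast}(j) \geq 1$. To finish I must upgrade the hypothesized inequality $f(j) < \langle \chi_{i^\ast}, \Delta^j \rangle$ to $f(j) < \langle \chi, \Delta^j \rangle$. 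The key computation is
\[
\langle \chi_{i^\ast} - \chi,\, \Delta^j \rangle \;=\; \sum_{k \neq j} \bigl(\chi_{i^\ast}(k) - \chi(k)\bigr)\,\Delta_{kj} \;+\; \bigl(\chi_{i^\ast}(j) - \chi(j)\bigr)\,\Delta_{jj}.
\]
The diagonal term vanishes by the choice of $i^\ast$, and each off-diagonal term is nonpositive because $\chi_{i^\ast}(k) \geq \chi(k)$ and $\Delta_{kj} \leq 0$ for $k \neq j$. Hence $\langle \chi_{i^\ast}, \Delta^j \rangle \leq \langle \chi, \Delta^j \rangle$, and the required inequality follows.

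The main obstacle is the backward direction: one must locate an index $j$ at which $\chi_{i^\ast}$ and $\chi$ \emph{exactly} coincide, so that the diagonal contribution drops out and the off-diagonal sign constraints $\Delta_{kj} \leq 0$ finish the job. Identifying $j = \pi(i^\ast)$ via the maximality argument is the whole trick. The forward direction is a routine greedy recursion, whose only subtlety is the verification $\chi_i \in \Omega(\mathbf{r})$ at each step, which is immediate from $\sum_k \chi_i(k) = m - i + 1 \geq 1$ together with $\chi_i \leq \mathbf{r}$.
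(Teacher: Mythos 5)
Your proof is correct and follows essentially the same route as the paper: the forward direction is the same greedy construction removing one vertex of the multiset at a time, and in the backward direction your maximal index $i^\ast$ with $\chi_{i^\ast}\geq\chi$ coincides with the paper's $\min_j\theta(j)$, after which both arguments exploit that the diagonal term cancels exactly and the off-diagonal terms have the right sign since $\Delta_{kj}\leq 0$. No gaps.
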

\begin{proof}Suppose that  $f$ is a  $(\Delta,{\bf r})$-parking
function. We construct a sequence $$\pi(1),\pi(2),\ldots,\pi(m)$$ of
vertices in $V({\bf r})$
by the following algorithm.\\
{\bf Algorithm A.}
\begin{itemize}\item Step 1. Let $W_1=V({\bf r})$, $\chi_1$ the
characteristic function of $W_1$ and $$U_1=\left\{j\in W_1\mid 0\leq
f(j)<\langle\chi_1,\Delta_j\rangle\right\}.$$ Set $\pi(1)\in U_1$.
\item Step 2. At time $i\geq 2$, suppose $\pi(1),\ldots,\pi(i-1)$
are determined. Let $$W_i=V({\bf r})\setminus\{\pi(j)\mid
j=1,\ldots,i-1\},$$ $\chi_i$ the characteristic function of $W_i$
and
$$U_i=\left\{j\in W_i\mid 0\leq
f(j)<\langle\chi_i,\Delta_j\rangle\right\}.$$ Set $\pi(i)\in U_i$.
\end{itemize} By Algorithm A, iterating Step $2$ until $i=m$, we obtain the sequence of vertices as
desired.

Conversely, suppose that there is a sequence of vertices in $V({\bf
r})$
$$\pi(1),\ldots,\pi(m)$$ such that for every $i\in\{1,2,\ldots,m\}$,  $$0\leq f(\pi(i))<\langle\chi_i,\Delta^{\pi(i)}\rangle=\chi_i(\pi(i))\Delta_{\pi(i),\pi(i)}
-\sum\limits_{j\neq \pi(i)}\chi_i(j)(-\Delta_{j,\pi(i)}),$$ where
$\chi_i$ is the characteristic function of
$\{\pi(i),\pi(i+1),\ldots,\pi(m)\}$.

Let $\chi\in \Omega({\bf r})$ and $U= \{j\mid \chi(j)\neq 0\}$. For
every $j\in U$, use $\theta(j)$ to denote the  unique index
$k\in\{1,2,\ldots,m\}$ such that $\pi(k)=j$ and $
\chi_k(j)=\chi(j)$. Let $i=\min\{\theta(j)\mid j\in U\}$. Then
$\chi_i(\pi(i))=\chi(\pi(i))\neq 0$ and $\chi_i(k)\geq \chi(k)$ for
every $k\neq \pi(i)$. Thus
\begin{eqnarray*}f(\pi(i))&<&\chi_i(\pi(i))\Delta_{\pi(i),\pi(i)}
-\sum\limits_{j\neq \pi(i)}\chi_i(j)(-\Delta_{j,\pi(i)})\\
&= &\chi(\pi(i))\Delta_{\pi(i),\pi(i)}
-\sum\limits_{j\neq \pi(i)}\chi_i(j)(-\Delta_{j,\pi(i)})\\
&\leq &\chi(\pi(i))\Delta_{\pi(i),\pi(i)}
-\sum\limits_{j\neq \pi(i)}\chi(j)(-\Delta_{j,\pi(i)})\\
&=&\langle\chi,\Delta^{\pi(i)}\rangle.\end{eqnarray*} By Definition
\ref{definition}, $f$ is a $(\Delta,{\bf r})$-parking function.
\end{proof}

\begin{lem}\label{parking-r-r'} Suppose that ${\bf r},{\bf r}'\in\mathscr{R}(\Delta)$ and ${\bf r}\leq {\bf
r}'$. Then $\mathcal {P}(\Delta,{\bf r}')\subseteq \mathcal
{P}(\Delta,{\bf r}).$
\end{lem}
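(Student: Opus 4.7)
The plan is to observe that the lemma follows directly from the definition of a $(\Delta,\mathbf{r})$-parking function combined with a monotonicity property of the index set $\Omega(\mathbf{r})$. Specifically, I would first note that whenever $\mathbf{r} \leq \mathbf{r}'$ (componentwise), we have the inclusion $\Omega(\mathbf{r}) \subseteq \Omega(\mathbf{r}')$: any integer vector $\chi$ with $0 \leq \chi(i) \leq r_i$ for each $i$ and $\chi \neq 0$ automatically satisfies $0 \leq \chi(i) \leq r_i'$ since $r_i \leq r_i'$, and the nonzero condition is the same in both sets.

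Next, I would unwind Definition \ref{definition}. If $f \in \mathcal{P}(\Delta,\mathbf{r}')$, then for every $\chi \in \Omega(\mathbf{r}')$ there is a coordinate $j$ with $\chi(j) \geq 1$ and $0 \leq f(j) < \langle \chi, \Delta^j\rangle$. Restricting the universal quantifier from $\Omega(\mathbf{r}')$ down to the subset $\Omega(\mathbf{r})$ costs nothing, so the very same $f$ satisfies the condition defining $\mathcal{P}(\Delta,\mathbf{r})$. The inclusion $\mathcal{P}(\Delta,\mathbf{r}') \subseteq \mathcal{P}(\Delta,\mathbf{r})$ follows immediately.

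There is no real obstacle here; the lemma is a direct consequence of the definitions and the trivial set inclusion $\Omega(\mathbf{r}) \subseteq \Omega(\mathbf{r}')$. The lemma is clearly intended as one direction of a more substantial result: in a companion argument (presumably the next lemma) the authors will have to establish the reverse inclusion, showing that enlarging $\mathbf{r}$ does not actually shrink the set of parking functions, which combined with this lemma will yield the independence of $\mathcal{P}(\Delta,\mathbf{r})$ from the choice of $\mathbf{r} \in \mathscr{R}(\Delta)$. That converse, not the present lemma, is where the real work lies; here one only needs to record the easy direction, for which a three-line argument suffices.
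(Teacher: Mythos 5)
Your proof is correct and is essentially identical to the paper's: both rest on the observation that ${\bf r}\leq{\bf r}'$ gives $\Omega({\bf r})\subseteq\Omega({\bf r}')$, so the universally quantified condition defining $\mathcal{P}(\Delta,{\bf r}')$ is at least as strong as the one defining $\mathcal{P}(\Delta,{\bf r})$. Nothing is missing; the paper's own proof is equally short.
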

\begin{proof} Note that $\Omega({\bf r})\subseteq\Omega({\bf
r}')$ since ${\bf r}\leq {\bf r}'$. So $f$ is a $(\Delta,{\bf
r})$-parking function if it is a $(\Delta,{\bf r}')$-parking
function. Hence we have $\mathcal {P}(\Delta,{\bf r}')\subseteq
\mathcal {P}(\Delta,{\bf r}).$
\end{proof}
\begin{lem} Suppose that ${\bf r},{\bf r}'\in\mathscr{R}(\Delta)$. Then $$\mathcal {P}(\Delta,{\bf r}+{\bf r}')=\mathcal
{P}(\Delta,{\bf r})\cap\mathcal {P}(\Delta,{\bf r}').$$
\end{lem}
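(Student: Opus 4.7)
The plan is to prove the two inclusions separately. First observe that ${\bf r}+{\bf r}'\in\mathscr{R}(\Delta)$, since ${\bf r},{\bf r}'>0$ gives ${\bf r}+{\bf r}'>0$ and $({\bf r}+{\bf r}')\Delta = {\bf r}\Delta + {\bf r}'\Delta \geq 0$, so the statement makes sense.

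For the inclusion $\mathcal{P}(\Delta,{\bf r}+{\bf r}')\subseteq \mathcal{P}(\Delta,{\bf r})\cap\mathcal{P}(\Delta,{\bf r}')$, I would apply Lemma \ref{parking-r-r'} twice, using ${\bf r}+{\bf r}'\geq {\bf r}$ and ${\bf r}+{\bf r}'\geq {\bf r}'$. Each application yields $\mathcal{P}(\Delta,{\bf r}+{\bf r}')\subseteq \mathcal{P}(\Delta,{\bf r})$ and $\mathcal{P}(\Delta,{\bf r}+{\bf r}')\subseteq \mathcal{P}(\Delta,{\bf r}')$ respectively, and intersecting gives the claim.

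The reverse inclusion is the real content. Here I would use the sequence characterization from Lemma \ref{parking-sequence}. Suppose $f\in\mathcal{P}(\Delta,{\bf r})\cap\mathcal{P}(\Delta,{\bf r}')$. Then there exist sequences $\pi(1),\ldots,\pi(m)$ in $V({\bf r})$ and $\pi'(1),\ldots,\pi'(m')$ in $V({\bf r}')$ (where $m=m({\bf r})$, $m'=m({\bf r}')$) witnessing the parking property with respect to ${\bf r}$ and ${\bf r}'$. The natural candidate witnessing sequence in $V({\bf r}+{\bf r}')$ is the concatenation
\[
\pi(1),\pi(2),\ldots,\pi(m),\pi'(1),\pi'(2),\ldots,\pi'(m').
\]
For indices $i\in\{m+1,\ldots,m+m'\}$ the suffix coincides with the tail of the $\pi'$-sequence, so the parking inequality is inherited directly from the ${\bf r}'$-witness. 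For indices $i\in\{1,\ldots,m\}$, the characteristic function $\chi_i$ of the tail of the concatenated sequence equals $\tilde\chi_i+{\bf r}'$, where $\tilde\chi_i$ is the characteristic function of $\{\pi(i),\ldots,\pi(m)\}$ from the original ${\bf r}$-witness. Therefore
\[
\langle\chi_i,\Delta^{\pi(i)}\rangle \;=\; \langle\tilde\chi_i,\Delta^{\pi(i)}\rangle + \langle{\bf r}',\Delta^{\pi(i)}\rangle \;\geq\; \langle\tilde\chi_i,\Delta^{\pi(i)}\rangle \;>\; f(\pi(i)),
\]
where the key inequality $\langle{\bf r}',\Delta^{\pi(i)}\rangle\geq 0$ is precisely the condition ${\bf r}'\Delta\geq 0$ applied at coordinate $\pi(i)$. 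Invoking Lemma \ref{parking-sequence} in the other direction, $f\in\mathcal{P}(\Delta,{\bf r}+{\bf r}')$.

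The main obstacle, and the only nonroutine step, is identifying the correct witnessing sequence. Once one realizes that simple concatenation works, everything reduces to the column-sum-nonnegativity condition built into the definition of $\mathscr{R}(\Delta)$. The symmetric concatenation (starting with $\pi'$ then $\pi$) works equally well by the same argument.
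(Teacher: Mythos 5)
Your proposal is correct and follows essentially the same route as the paper: the easy inclusion via Lemma \ref{parking-r-r'}, and the reverse inclusion by concatenating the two witnessing sequences from Lemma \ref{parking-sequence} and observing that the suffix characteristic function for the first block is the original one shifted by ${\bf r}'$, so the inequality survives because ${\bf r}'\Delta\geq 0$. The only (harmless) addition is your explicit check that ${\bf r}+{\bf r}'\in\mathscr{R}(\Delta)$, which the paper leaves implicit.
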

\begin{proof} By Lemma \ref{parking-r-r'}, we have $\mathcal {P}(\Delta,{\bf r}+{\bf r}')\subseteq\mathcal {P}(\Delta,{\bf
r})$ and $\mathcal {P}(\Delta,{\bf r}+{\bf r}')\subseteq\mathcal
{P}(\Delta,{\bf r}')$. So, $\mathcal {P}(\Delta,{\bf r}+{\bf
r}')\subseteq \mathcal {P}(\Delta,{\bf r})\cap\mathcal
{P}(\Delta,{\bf r}').$

Conversely, let $m=\sum\limits_{i=1}^nr_i$ and
$m'=\sum\limits_{i=1}^nr'_i$. For any $f\in \mathcal {P}(\Delta,{\bf
r})\cap\mathcal {P}(\Delta,{\bf r}')$, by Lemma
\ref{parking-sequence}, there are a sequence
$$\pi(1),\ldots,\pi(m)$$ of vertices in $V({\bf r})$ such that  $$0\leq
f(\pi(i))<\langle\chi_i,\Delta^{\pi(i)}\rangle$$ for every
$i\in\{1,2,\cdots,m\}$  and a sequence
$$\pi'(1),\ldots,\pi'(m')$$ of vertices in $V({\bf r}')$ such that $$0\leq
f(\pi'(i))<\langle\chi'_i,\Delta^{\pi'(i)}\rangle$$  for every
$i\in\{1,2,\cdots,m'\}$ where  $\chi_i$ and $\chi'_i$ are the
characteristic functions of $\{\pi(i),\pi(i+1),\ldots,\pi(m)\}$ and
$\{\pi'(i),\pi'(i+1),\ldots,\pi'(m')$ respectively.

Let us consider the following sequence
$$\sigma(1),\ldots,\sigma(m),\sigma(m+1),\ldots,\sigma(m+m')$$
where
$$\sigma(i)=\left\{\begin{array}{lll}\pi(i)&\text{ if }&1\leq i\leq m\\
\pi'(i-m)&\text{ if }&1+m\leq i\leq m+m'\end{array}\right.$$ For
every $i=1,2,\cdots,m+m'$, let $\hat{\chi}_i$ is the characteristic
functions of $\{\sigma(i),\cdots,\sigma(m+m')\}$. Then we have
$$f(\sigma(i))=\left\{\begin{array}{lll}f(\pi(i))&\text{ if }&1\leq i\leq m\\
f(\pi'(i-m))&\text{ if }&1+m\leq i\leq m+m'\end{array}\right.$$ and
$$\langle \hat{\chi}_i,\Delta^{\sigma(i)}\rangle=\left\{\begin{array}{lll}\langle {\chi}_i+{\bf r'},\Delta^{\sigma(i)}\rangle=\langle {\chi}_i,\Delta^{\sigma(i)}\rangle+\langle {\bf r'},\Delta^{\sigma(i)}\rangle&\text{ if }&1\leq i\leq m\\
\langle {\chi'}_i,\Delta^{\sigma(i)}\rangle&\text{ if }&1+m\leq
i\leq m+m'\end{array}\right.$$ Since ${\bf r}'\Delta\geq 0$, we have
$$f(\sigma(i))<\langle \hat{\chi}_i,\Delta^{\sigma(i)}\rangle$$ for every
$i=1,2,\ldots,m+m'$. By Lemma \ref{parking-sequence}, $f$ is a
$(\Delta,{\bf r}+{\bf r}')$-parking function. Hence, $\mathcal
{P}(\Delta,{\bf r}+{\bf r}')=\mathcal {P}(\Delta,{\bf
r})\cap\mathcal {P}(\Delta,{\bf r}').$
\end{proof}
\begin{cor}\label{parking-br-ar+br}\begin{enumerate}[(1)]\item Suppose that ${\bf r}\in\mathscr{R}(\Delta)$ and $b$ is
a positive integer. Then
$$\mathcal {P}(\Delta,b{\bf r})=\mathcal {P}(\Delta,{\bf r}).$$ \item
Suppose that ${\bf r}_1,{\bf r}_2,\cdots,{\bf
r}_k\in\mathscr{R}(\Delta)$ and $b_1,b_2,\cdots, b_k$ are $k$
positive integers. Then
$$\mathcal {P}(\Delta,b_1{\bf r}_1+b_2{\bf r}_2+\cdots+b_k{\bf
r}_k)=\bigcap\limits_{i=1}^k\mathcal {P}(\Delta,{\bf
r}_i).$$\end{enumerate}
\end{cor}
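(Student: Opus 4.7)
The plan is to derive both parts directly from the previous lemma, which states that
\[
\mathcal{P}(\Delta,{\bf r}+{\bf r}')=\mathcal{P}(\Delta,{\bf r})\cap\mathcal{P}(\Delta,{\bf r}')
\]
for any ${\bf r},{\bf r}'\in\mathscr{R}(\Delta)$. Before applying the lemma I would first observe the easy fact that $\mathscr{R}(\Delta)$ is closed under addition and under multiplication by a positive integer: if ${\bf r},{\bf r}'>0$ and ${\bf r}\Delta\geq 0$, ${\bf r}'\Delta\geq 0$, then $b{\bf r}>0$, $({\bf r}+{\bf r}')>0$, and both $b{\bf r}\Delta$ and $({\bf r}+{\bf r}')\Delta$ are nonnegative. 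This ensures that every vector to which I apply $\mathcal{P}(\Delta,\cdot)$ really lies in $\mathscr{R}(\Delta)$, so the previous lemma is legitimately applicable.

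For part (1), I would prove $\mathcal{P}(\Delta,b{\bf r})=\mathcal{P}(\Delta,{\bf r})$ by induction on the positive integer $b$. The base case $b=1$ is trivial. For the inductive step, write $b{\bf r}=(b-1){\bf r}+{\bf r}$ with both summands in $\mathscr{R}(\Delta)$, and apply the previous lemma to obtain
\[
\mathcal{P}(\Delta,b{\bf r})=\mathcal{P}(\Delta,(b-1){\bf r})\cap\mathcal{P}(\Delta,{\bf r}).
\]
By the induction hypothesis $\mathcal{P}(\Delta,(b-1){\bf r})=\mathcal{P}(\Delta,{\bf r})$, so the right side collapses to $\mathcal{P}(\Delta,{\bf r})$, completing the induction.

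For part (2), I would proceed by induction on $k$, repeatedly splitting off one summand at a time. The case $k=1$ is part (1). For $k\geq 2$, writing ${\bf s}=b_1{\bf r}_1+\cdots+b_{k-1}{\bf r}_{k-1}$ (which lies in $\mathscr{R}(\Delta)$ by the closure remark), the previous lemma gives
\[
\mathcal{P}(\Delta,{\bf s}+b_k{\bf r}_k)=\mathcal{P}(\Delta,{\bf s})\cap\mathcal{P}(\Delta,b_k{\bf r}_k).
\]
By part (1), $\mathcal{P}(\Delta,b_k{\bf r}_k)=\mathcal{P}(\Delta,{\bf r}_k)$, and by the inductive hypothesis, $\mathcal{P}(\Delta,{\bf s})=\bigcap_{i=1}^{k-1}\mathcal{P}(\Delta,{\bf r}_i)$. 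Combining gives the desired equality.

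There is no real obstacle here: the whole corollary is a formal consequence of the previous lemma together with the observation that $\mathscr{R}(\Delta)$ is closed under the relevant integer combinations, so the ``hard part'' has already been handled when constructing the concatenated witness sequence in the proof of the lemma. The only thing to be careful about is to verify the closure claim so that each intermediate application of the lemma is valid.
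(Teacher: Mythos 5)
Your proof is correct and follows exactly the route the paper intends: the corollary is stated without proof as an immediate consequence of the preceding lemma $\mathcal{P}(\Delta,{\bf r}+{\bf r}')=\mathcal{P}(\Delta,{\bf r})\cap\mathcal{P}(\Delta,{\bf r}')$, and your inductions on $b$ and on $k$, together with the (easy but worth stating) closure of $\mathscr{R}(\Delta)$ under addition and positive integer scaling, are precisely the iteration of that lemma that the authors leave implicit.
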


\begin{thm}\label{thm-r-parking-set-same} For any ${\bf r},{\bf r}'\in\mathscr{R}(\Delta)$, $\mathcal{P}(\Delta,{\bf r})= \mathcal{P}(\Delta,{\bf
r}').$
\end{thm}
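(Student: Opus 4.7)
The plan is to combine the scaling invariance from Corollary~\ref{parking-br-ar+br}(1) with the monotonicity from Lemma~\ref{parking-r-r'}, by passing to a common multiple that dominates both ${\bf r}$ and ${\bf r}'$ componentwise.

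First I would observe that since ${\bf r}=(r_1,\ldots,r_n)>0$ and ${\bf r}'=(r_1',\ldots,r_n')>0$ are strictly positive integer vectors, I can pick a positive integer $b$ large enough that $b{\bf r}\geq {\bf r}'$ componentwise; concretely $b=\max_i \lceil r_i'/r_i\rceil$ works. Clearly $b{\bf r}\in\mathscr{R}(\Delta)$ because $b{\bf r}>0$ and $(b{\bf r})\Delta=b({\bf r}\Delta)\geq 0$. Now Lemma~\ref{parking-r-r'} applied to the pair ${\bf r}'\leq b{\bf r}$ gives $\mathcal{P}(\Delta,b{\bf r})\subseteq \mathcal{P}(\Delta,{\bf r}')$, while Corollary~\ref{parking-br-ar+br}(1) yields $\mathcal{P}(\Delta,b{\bf r})=\mathcal{P}(\Delta,{\bf r})$. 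Chaining these gives $\mathcal{P}(\Delta,{\bf r})\subseteq \mathcal{P}(\Delta,{\bf r}')$.

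The argument is symmetric in ${\bf r}$ and ${\bf r}'$: choosing a positive integer $b'$ with $b'{\bf r}'\geq {\bf r}$ and applying the same two steps produces the reverse inclusion $\mathcal{P}(\Delta,{\bf r}')\subseteq \mathcal{P}(\Delta,{\bf r})$, and equality follows.

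I do not anticipate any real obstacle in this proof, since every ingredient is already available: the nontrivial work has been done in Lemma~\ref{parking-sequence} (which powers Corollary~\ref{parking-br-ar+br}) and Lemma~\ref{parking-r-r'}. The only point worth double-checking is that the scaled vector $b{\bf r}$ still lies in $\mathscr{R}(\Delta)$, which is immediate from linearity of ${\bf r}\mapsto {\bf r}\Delta$ and the fact that multiplication by a positive integer preserves both strict positivity and the inequality ${\bf r}\Delta\geq 0$.
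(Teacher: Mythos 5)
Your proposal is correct and follows essentially the same route as the paper's own proof: choose a positive integer $b$ with $b{\bf r}\geq {\bf r}'$, combine Corollary~\ref{parking-br-ar+br}(1) with Lemma~\ref{parking-r-r'} to get $\mathcal{P}(\Delta,{\bf r})=\mathcal{P}(\Delta,b{\bf r})\subseteq\mathcal{P}(\Delta,{\bf r}')$, and conclude by symmetry. Your added remarks (the explicit choice $b=\max_i\lceil r_i'/r_i\rceil$ and the check that $b{\bf r}\in\mathscr{R}(\Delta)$) are harmless elaborations of the same argument.
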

\begin{proof} Note that there is a positive $b$ such that $b{\bf r}\geq {\bf
r}'$ since ${\bf r}>0$. By Lemma \ref{parking-r-r'} and Corollary
\ref{parking-br-ar+br}(1), we have
$$\mathcal{P}(\Delta,{\bf r})=\mathcal{P}(\Delta,b{\bf r})\subseteq \mathcal{P}(\Delta,{\bf r}').$$
Similarly, we have $\mathcal{P}(\Delta,{\bf r}')\subseteq
\mathcal{P}(\Delta,{\bf r}).$ Hence, $\mathcal{P}(\Delta,{\bf r})=
\mathcal{P}(\Delta,{\bf r}')$.
\end{proof}

Theorem \ref{thm-r-parking-set-same} tells us that the set of
$(\Delta,{\bf r})$-parking functions is independent of ${\bf r}$ for
any ${\bf r}\in\mathscr{R}(\Delta)$. So, $(\Delta,{\bf r})$-parking
functions  are simply called $\Delta$-parking functions.

\begin{lem}\label{f-f'-parking-coset} Let ${\bf r}\in\mathscr{R}(\Delta)$. Suppose $f$ and $f'$  are two $(\Delta,{\bf
r})$-parking functions. If $f'-f\in \langle \Delta\rangle$, then
$f'=f$.
\end{lem}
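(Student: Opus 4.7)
The plan is a proof by contradiction. Since $\det\Delta\neq 0$, the rows $\Delta_1,\ldots,\Delta_n$ are $\mathbb{Z}$-linearly independent, so the hypothesis $f'-f\in\langle\Delta\rangle$ yields a unique integer row vector ${\bf a}=(a_1,\ldots,a_n)\in\mathbb{Z}^n$ with $f'-f={\bf a}\Delta$. I will show that assuming ${\bf a}\neq 0$ forces the parking property of one of $f,f'$ to fail, and hence $f=f'$.

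Decompose ${\bf a}={\bf a}^+-{\bf a}^-$ into its positive and negative parts, nonnegative integer vectors with disjoint supports. The first technical issue is that the parking-function test vector $\chi$ must lie in $\Omega({\bf r})$, i.e.\ satisfy $\chi\leq {\bf r}$ coordinatewise; a priori, neither ${\bf a}^+$ nor ${\bf a}^-$ need satisfy this. To remedy this, pick a positive integer $b$ with $br_i\geq|a_i|$ for every $i$; then $b{\bf r}\in\mathscr{R}(\Delta)$, and Corollary \ref{parking-br-ar+br}(1) (or Theorem \ref{thm-r-parking-set-same}) gives $f,f'\in\mathcal{P}(\Delta,b{\bf r})$. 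Replacing ${\bf r}$ by $b{\bf r}$, I may assume ${\bf a}^+,{\bf a}^-\leq{\bf r}$.

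Now suppose ${\bf a}\neq 0$. Swapping $f$ and $f'$ negates ${\bf a}$ and interchanges ${\bf a}^+$ and ${\bf a}^-$, so without loss of generality ${\bf a}^+\neq 0$, i.e.\ $\chi:={\bf a}^+\in\Omega({\bf r})$. Applying Definition \ref{definition} to $f'$ with this $\chi$, there exists $j$ with $\chi(j)=a_j\geq 1$ and
$$0\leq f'(j)<\langle{\bf a}^+,\Delta^j\rangle.$$
Because $a_j\geq 1$ gives $a_j^-=0$, and $\Delta_{ij}\leq 0$ for $i\neq j$, the cross term satisfies $\langle{\bf a}^-,\Delta^j\rangle=\sum_{i\neq j}a_i^-\Delta_{ij}\leq 0$. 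Therefore
$$f(j)=f'(j)-({\bf a}\Delta)_j=f'(j)-\langle{\bf a}^+,\Delta^j\rangle+\langle{\bf a}^-,\Delta^j\rangle<0,$$
contradicting $f(j)\in\{0,1,2,\ldots\}$.

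The main obstacle is the sign bookkeeping: the argument hinges on using the off-diagonal sign condition $\Delta_{ij}\leq 0$ to force the auxiliary term $\langle{\bf a}^-,\Delta^j\rangle$ to be nonpositive, and this works only because the disjoint-support decomposition of ${\bf a}$ guarantees $a_j^-=0$ at the index $j$ produced by the parking axiom. The rescaling via Corollary \ref{parking-br-ar+br}(1) is a preliminary normalization that makes the parking axiom applicable to the test vector $\chi={\bf a}^+$.
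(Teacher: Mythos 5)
Your proof is correct and follows essentially the same route as the paper's: both rescale ${\bf r}$ to $b{\bf r}$ via Corollary \ref{parking-br-ar+br}(1) so that the positive part of the coefficient vector lies in $\Omega(b{\bf r})$, take $\chi$ to be that positive part, and use $\Delta_{ij}\leq 0$ for $i\neq j$ to control the negative-part cross term. The only difference is presentational — you apply the parking axiom to $f'$ first and contradict $f(j)\geq 0$, while the paper starts from $f(j)\geq 0$ and contradicts the parking axiom for $f'$ — which is the same argument read in the contrapositive direction.
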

\begin{proof} Assume that $f'\neq f$. Then $f'-f=x\Delta$ and $x\neq
0$. By symmetry, we may suppose that $x_j>0$ for some $j\in
\{1,2,\ldots,n\}$.

Let $b$ be a positive integer such that $$\min\{br_i\mid
i=1,2,\cdots,n\}\geq \max \{x_j\mid x_j>0\text{ and }1\leq j\leq
n\}.$$  Let
$$\chi(j)=\left\{\begin{array}{lll}x_j&\text{ if }& x_j>0\\
0&\text{ if }& x_j\leq 0\end{array}\right.$$ for each
$j=1,2,\cdots,n$. Then $\chi\in\Omega(\Delta,b{\bf r})$ and for any
$j$ with $\chi(j)>0$
\begin{eqnarray*}0\leq f(j)&=&f'(j)-\sum\limits_{k=1}^nx_k\Delta_{k,j}\\
&\leq &f'(j)-\sum\limits_{k=1\atop
x_k>0}^nx_k\Delta_{k,j}\\
&=&f'(j)-\langle \chi,\Delta^j\rangle.\end{eqnarray*} So, for any
$j$ with $\chi(j)>0$, we have $f'(j)\geq \langle
\chi,\Delta^j\rangle$. Hence $f'$ is not a $(\Delta,b{\bf
r})$-parking function since $\chi\in\Omega(b{\bf r})$. But Corollary
\ref{parking-br-ar+br}(1) implies that  $f'$ is a $(\Delta,b{\bf
r})$-parking functions since  $f'$ is a $(\Delta,{\bf r})$-parking
functions, a contradiction.
\end{proof}

Lemma \ref{f-f'-parking-coset} implies distinct $\Delta$-parking
functions cannot be equivalent and  every equivalent class of
$\mathbb{Z}^n$ contains at most one $\Delta$-parking function. So we
obtain the following corollary.

\begin{cor}\label{parking-<=det} The number of $\Delta$-parking functions is less than or equal to $ \det
\Delta$.
\end{cor}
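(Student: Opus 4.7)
The plan is to derive this immediately from Lemma \ref{f-f'-parking-coset} by counting cosets of the sublattice $\langle\Delta\rangle$ in $\mathbb{Z}^n$. Fix any ${\bf r}\in\mathscr{R}(\Delta)$, so that $\mathcal{P}(\Delta)=\mathcal{P}(\Delta,{\bf r})$ by Theorem \ref{thm-r-parking-set-same}. Consider the natural projection $q:\mathbb{Z}^n\to\mathbb{Z}^n/\langle\Delta\rangle$ restricted to $\mathcal{P}(\Delta)$. If $f,f'\in\mathcal{P}(\Delta)$ satisfy $q(f)=q(f')$, then $f'-f\in\langle\Delta\rangle$, and Lemma \ref{f-f'-parking-coset} forces $f=f'$. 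Hence the restriction of $q$ to $\mathcal{P}(\Delta)$ is injective, so
\[
|\mathcal{P}(\Delta)|\leq |\mathbb{Z}^n/\langle\Delta\rangle|.
\]

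Second, I would invoke the standard fact that when the $n$ rows $\Delta_1,\ldots,\Delta_n$ of an integer $n\times n$ matrix $\Delta$ are linearly independent, the index of the sublattice they generate equals $|\det\Delta|$, i.e.\ $|\mathbb{Z}^n/\langle\Delta\rangle|=|\det\Delta|$. Since the proposition preceding the Corollary establishes $\det\Delta>0$ under the conditions in (\ref{condition-matrix}), this index is exactly $\det\Delta$. Combining the two steps yields $|\mathcal{P}(\Delta)|\leq \det\Delta$, which is the desired bound.

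There is essentially no obstacle here: the whole substance of the argument lives in Lemma \ref{f-f'-parking-coset}, and the only other ingredient is the textbook identification of the lattice index with the determinant, together with its positivity, both of which are already in hand. The matching lower bound $|\mathcal{P}(\Delta)|\geq \det\Delta$ is deferred to the later analysis of $\Delta$-recurrent configurations, so here I would not attempt equality.
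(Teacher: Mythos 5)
Your argument is correct and is essentially identical to the paper's: both deduce injectivity of the map from $\mathcal{P}(\Delta)$ into $\mathbb{Z}^n/\langle\Delta\rangle$ from Lemma \ref{f-f'-parking-coset} (together with Theorem \ref{thm-r-parking-set-same}) and then identify the index of the sublattice with $\det\Delta$. You merely spell out the coset-counting and the positivity of $\det\Delta$ a bit more explicitly than the paper does.
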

\begin{proof} Since  the order of the quotient of
the integer  lattice $\mathbb{Z}^n/\langle \Delta\rangle$ is $\det
\Delta$, it follows from Lemma \ref{f-f'-parking-coset} and Theorem
\ref{thm-r-parking-set-same} that $|\mathcal{P}(\Delta)|\leq \det
\Delta$.
\end{proof}
\section{$\Delta$-recurrent configurations}

In this section, we shall give the definition of $\Delta$-recurrent
configurations and study their properties, where the matrix $\Delta$
satisfies the conditions in  (\ref{condition-matrix}).

\begin{prop}A  matrix $\Delta$  is a toppling matrix if and only if it satisfies the conditions in  (\ref{condition-matrix}).
\end{prop}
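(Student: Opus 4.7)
The plan is to prove the two implications separately, leveraging the three parts of Proposition \ref{laplace-matrix-topple-property}.

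For the direction ``toppling implies (\ref{condition-matrix})'', I would argue that the sign condition $\Delta_{ij}\le 0$ for $i\ne j$ is already part of the toppling definition; that $\det\Delta\ne 0$ follows from Proposition \ref{laplace-matrix-topple-property}(2), which guarantees strictly positive principal minors (in particular $\det\Delta>0$); and that the existence of ${\bf r}>0$ with ${\bf r}\Delta\ge 0$ is obtained by applying Proposition \ref{laplace-matrix-topple-property}(1) to $\Delta^{T}$ to produce a column vector ${\bf h}>0$ satisfying $\Delta^{T}{\bf h}>0$, whose transpose ${\bf r}={\bf h}^{T}$ then fulfills ${\bf r}\Delta>0\ge 0$.

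For the converse, the central idea is to form the scaled matrix $\tilde{\Delta}=\tilde{\Delta}({\bf r})=\mathrm{diag}(r_1,\ldots,r_n)\Delta$ already considered at the beginning of Section 2, and to reduce to Proposition \ref{laplace-matrix-topple-property}(3). I would first note that $\tilde{\Delta}$ inherits non-positive off-diagonal entries from $\Delta$ (each $r_i>0$ and $\Delta_{ij}\le 0$ for $i\ne j$), has non-negative column sums (since $\sum_i r_i\Delta_{ij}=({\bf r}\Delta)_j\ge 0$), and has non-zero determinant $r_1\cdots r_n\det\Delta$. Consequently $\tilde{\Delta}^{T}$ has non-positive off-diagonal entries, non-negative row sums, and non-zero determinant, so Proposition \ref{laplace-matrix-topple-property}(3) shows that $\tilde{\Delta}^{T}$ is toppling. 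A further application of Proposition \ref{laplace-matrix-topple-property}(1) yields that $\tilde{\Delta}$ is toppling as well, furnishing some ${\bf h}>0$ with $\tilde{\Delta}{\bf h}>0$; since $\tilde{\Delta}{\bf h}=\mathrm{diag}(r_1,\ldots,r_n)\,\Delta{\bf h}$ and each $r_i>0$, this forces $\Delta{\bf h}>0$ entrywise, proving that $\Delta$ is a toppling matrix.

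The main conceptual obstacle, and the reason the converse is not completely immediate, is that the hypothesis in (\ref{condition-matrix}) supplies only a weak inequality ${\bf r}\Delta\ge 0$ acting from the left, whereas the toppling definition demands a strict inequality $\Delta{\bf h}>0$ acting from the right. The diagonal scaling $\tilde{\Delta}=\mathrm{diag}({\bf r})\Delta$ is precisely the device that converts the left-sided weak hypothesis into non-negative column sums of $\tilde{\Delta}$, which after transposition become the non-negative row sums required by Proposition \ref{laplace-matrix-topple-property}(3); that proposition in turn upgrades the weak inequality to a strict one and flips the side on which the matrix acts.
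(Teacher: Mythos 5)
Your proposal is correct and follows essentially the same route as the paper: for the converse you form $\tilde{\Delta}=\mathrm{diag}({\bf r})\Delta$, pass to its transpose to invoke Proposition \ref{laplace-matrix-topple-property}(3) and (1), and then undo the positive diagonal scaling to get $\Delta{\bf h}>0$, which is exactly the paper's argument (spelled out slightly more explicitly). The forward direction via Proposition \ref{laplace-matrix-topple-property}(2) and an application of (1) to $\Delta^{T}$ also matches the paper's proof.
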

\begin{proof} Suppose that $\Delta$ satisfies the conditions in  (\ref{condition-matrix}). Then there exists a vector $${\bf r}=(r_1,\cdots,r_n)>0$$ such
that ${\bf r}\Delta \geq 0.$ Let $$\tilde{\Delta}=\begin{pmatrix}r_1&0&\cdots&0\\
0&r_2&\cdots&0\\
\cdots&\cdots&\cdots&\cdots\\
0&0&\cdots&r_n\end{pmatrix}\Delta=\begin{pmatrix}r_1\Delta_{11}&r_1\Delta_{12}&\cdots&r_1\Delta_{1n}\\
r_2\Delta_{21}&r_2\Delta_{22}&\cdots&r_2\Delta_{2n}\\
\cdots&\cdots&\cdots&\cdots\\
r_n\Delta_{n1}&r_n\Delta_{n2}&\cdots&r_n\Delta_{nn}\end{pmatrix}$$
By Proposition \ref{laplace-matrix-topple-property}(1) and (3),
$\tilde{\Delta}$ is a toppling matrix. There exists a column vector
${\bf h}>0$ such that $\tilde{\Delta}{\bf h}>0$. Suppose ${\bf
v}=(v_1,v_2,\ldots,v_n)^T=\Delta {\bf h}$. We have
$$\tilde{\Delta}{\bf h}=(r_1v_1,r_2v_2,\ldots,r_nv_n)^T>0.$$ This
implies $\Delta {\bf h}>0$ and $\Delta$  is a toppling matrix.

Conversely, suppose that $\Delta$  is a toppling matrix. By
Proposition \ref{laplace-matrix-topple-property}(2), we have $\det
\Delta\neq 0$. By Proposition
\ref{laplace-matrix-topple-property}(1), its transposed matrix
$\Delta^T$ is a toppling matrix. There exists a column vector ${\bf
h}>0$ such that $\Delta^T{\bf h}>0$. So ${\bf h}^T\Delta>0$ and
$\Delta$ satisfies the conditions in $(1)$.
\end{proof}

\begin{prop}\label{topple-matrix-property}
  A matrix $\Delta$ is a toppling matrix if and only if  all principal
minors of $\Delta$ are toppling matrices.
\end{prop}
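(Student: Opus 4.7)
The plan is to observe that this proposition is essentially a direct corollary of two results already established in the paper, and the proof will just be a chain of equivalences. The term "principal minors" should be read here as "principal submatrices" $\Delta[I]$ for nonempty $I \subseteq \{1,2,\ldots,n\}$, consistent with the usage elsewhere in the paper.

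First I would recall the Proposition immediately above, which states that $\Delta$ is a toppling matrix if and only if $\Delta$ satisfies the conditions (\ref{condition-matrix}). Applying this criterion not just to $\Delta$ itself but also to each of its principal submatrices, we reduce the statement to showing the following: $\Delta$ satisfies (\ref{condition-matrix}) if and only if $\Delta[I]$ satisfies (\ref{condition-matrix}) for every nonempty $I \subseteq \{1,2,\ldots,n\}$.

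But this latter equivalence is exactly the content of the earlier proposition in Section 2 (the one proved via Lemma \ref{det>=0} together with the matrix-forest theorem applied to the digraph $D(\Delta,{\bf r})$). So the argument is just: $\Delta$ is toppling $\iff$ $\Delta$ satisfies (\ref{condition-matrix}) $\iff$ every $\Delta[I]$ satisfies (\ref{condition-matrix}) $\iff$ every $\Delta[I]$ is toppling. The forward direction of the trivial ``if'' part (taking $I = \{1,\ldots,n\}$ so that $\Delta[I]=\Delta$) also falls out automatically.

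There is essentially no obstacle here; the only thing to be careful about is pointing out that the equivalence characterizing toppling matrices via conditions (\ref{condition-matrix}) applies uniformly to each principal submatrix $\Delta[I]$, since those conditions are of the same form (with $n$ replaced by $|I|$). Hence the proof reduces to a one-line chain of biconditionals and no fresh computation is required.
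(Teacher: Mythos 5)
Your argument is logically sound and free of circularity, but it takes a genuinely different route from the paper. The paper's own proof of this proposition is a short, self-contained computation: it takes a positive vector ${\bf h}$ with $\Delta{\bf h}^T>0$, restricts it to ${\bf h}[I]$, and observes that since the off-diagonal entries are nonpositive one has $\Delta_i[J]{\bf h}[J]^T\leq 0$ for $J=\{1,\ldots,n\}\setminus I$, hence $\Delta_i[I]{\bf h}[I]^T=\Delta_i{\bf h}^T-\Delta_i[J]{\bf h}[J]^T\geq\Delta_i{\bf h}^T>0$; thus the same witness, truncated, certifies that each $\Delta[I]$ is toppling (the converse direction being trivial by taking $I=\{1,\ldots,n\}$). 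You instead route everything through two earlier equivalences --- ``toppling $\iff$ conditions (\ref{condition-matrix})'' and ``$\Delta$ satisfies (\ref{condition-matrix}) $\iff$ every $\Delta[I]$ does'' --- and chain the biconditionals. This is valid: both cited propositions are stated for arbitrary integer square matrices, so they apply uniformly to each $\Delta[I]$, and neither of their proofs depends on the present proposition. What your approach buys is brevity and the full ``if and only if'' in one stroke; what it costs is that it silently imports the matrix-tree and matrix-forest machinery underlying the Section 2 proposition, whereas the paper's restriction-of-${\bf h}$ argument is elementary and exhibits an explicit witness for each submatrix. Your reading of ``principal minors'' as principal submatrices $\Delta[I]$ is the intended one.
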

\begin{proof}
Let $\Delta$ be a toppling matrix and ${\bf h} = (h_1,\cdots ,h_n)>
0$ an integer vector such that $\Delta {\bf h}^T > 0$. For each
nonempty subset $I\subseteq \{1,2,\cdots,n\}$, let
$J=\{1,2,\cdots,n\}\setminus I$ and suppose that $\Delta_i$ is the
$i$-th row of $\Delta$. Then for any $i\in I$, $\Delta_i[J]{\bf
h}[J]^T\leq 0$ and
$$\Delta[I]_i{\bf h}[I]^T=\Delta_i[I]{\bf h}[I]^T=\Delta_i
{\bf h}^T-\Delta_i[J]{\bf h}[J]^T\geq \Delta_i {\bf h}^T >0.$$ This
implies that $\Delta[I]{\bf h}[I]^T>0$ and $\Delta[I]$ is a toppling
matrix.
\end{proof}

\begin{prop} Let $\Delta= (\Delta_{ij})_{1\leq i,j\leq n}$ be an integer $n
\times n$-matrix with $\Delta_{ij}\leq 0\text{ for }i\neq j$ and
$adj(\Delta)=(A_{ij})_{1\leq i,j\leq n}$  the adjugate of $\Delta$.
Then $\Delta$ is a toppling matrix if and only if $\det \Delta>0$,
$A_{ii}> 0$ and $A_{ij}\geq 0$ for any $i\neq j$.
\end{prop}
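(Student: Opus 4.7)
The plan is to move back and forth between $\mathrm{adj}(\Delta)$ and $\Delta^{-1}$ via the identity $\Delta\cdot\mathrm{adj}(\Delta)=(\det\Delta)\,I$, which entrywise says $A_{ij}=(\det\Delta)(\Delta^{-1})_{ij}$.

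For the forward direction, assume $\Delta$ is a toppling matrix. Then $\det\Delta>0$ is immediate from the earlier corollary that every principal minor of $\Delta$ is strictly positive (take the full index set). The same corollary gives $A_{ii}=\det\Delta[\{1,\dots,n\}\setminus\{i\}]>0$. The substantive step is $A_{ij}\ge 0$ for $i\neq j$, and since $\det\Delta>0$ this reduces to showing $\Delta^{-1}\ge 0$ entrywise. I would prove the stronger claim that $\Delta\mathbf{x}\ge 0$ forces $\mathbf{x}\ge 0$, and then apply it to $\mathbf{x}=\Delta^{-1}e_j$. To prove the claim, fix a vector $\mathbf{h}>0$ with $\Delta\mathbf{h}>0$ (which exists because $\Delta$ is toppling). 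If some coordinate of $\mathbf{x}$ is negative, let $\lambda\ge 0$ be minimal such that $\mathbf{y}:=\mathbf{x}+\lambda\mathbf{h}\ge 0$; strict positivity of $\mathbf{h}$ guarantees such a $\lambda$ exists, and by hypothesis $\lambda>0$ and some coordinate $y_i=0$. Using $\Delta_{ik}\le 0$ for $k\neq i$ and $y_k\ge 0$, one gets $(\Delta\mathbf{y})_i=\sum_{k\neq i}\Delta_{ik}y_k\le 0$, whereas $(\Delta\mathbf{y})_i=(\Delta\mathbf{x})_i+\lambda(\Delta\mathbf{h})_i>0$ since $\Delta\mathbf{x}\ge 0$ and $\lambda(\Delta\mathbf{h})_i>0$. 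This contradiction forces $\mathbf{x}\ge 0$.

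For the converse, assume $\det\Delta>0$, $A_{ii}>0$, and $A_{ij}\ge 0$ for $i\neq j$. I would construct the required positive vector explicitly. Set $\mathbf{h}=\mathrm{adj}(\Delta)\cdot(1,1,\dots,1)^T$; then $h_i=\sum_{j=1}^n A_{ij}\ge A_{ii}>0$, so $\mathbf{h}>0$, and $\Delta\mathbf{h}=(\det\Delta)(1,\dots,1)^T>0$. Combined with the standing assumption $\Delta_{ij}\le 0$ for $i\neq j$, this is exactly the definition of a toppling matrix (and $\mathbf{h}$ is already an integer vector, so no clearing of denominators is needed).

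The main obstacle is the monotonicity step $\Delta\mathbf{x}\ge 0\Rightarrow\mathbf{x}\ge 0$ in the forward direction — the classical $M$-matrix trick — where one must verify carefully that the minimizing $\lambda$ exists (this is where strict positivity of $\mathbf{h}$ is used) and that the coordinate touching zero yields the sign contradiction. Everything else is cofactor bookkeeping together with the earlier corollary on positivity of principal minors of a toppling matrix.
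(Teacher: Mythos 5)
Your proof is correct, and your converse direction coincides with the paper's own: both take ${\bf h}=\mathrm{adj}(\Delta){\bf 1}^T$, note $h_i\geq A_{ii}>0$, and compute $\Delta{\bf h}=(\det\Delta){\bf 1}^T>0$. The forward direction, however, is genuinely different. The paper argues dynamically: it invokes the existence of a recurrent configuration ${\bf u}$ with $A_i^{c_i}{\bf u}={\bf u}$, observes that the representation vector ${\bf r}_i$ of the resulting avalanche is a nonnegative integer vector with $r_{ii}\geq 1$ satisfying ${\bf r}_i\Delta=c_i{\bf e}_i$, and reads off ${\bf r}_i=\frac{c_i}{\det\Delta}(A_{i1},\ldots,A_{in})$, whence the sign conditions on the cofactors. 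You instead establish inverse-positivity of $\Delta$ by the classical $M$-matrix monotonicity argument ($\Delta{\bf x}\geq 0\Rightarrow{\bf x}\geq 0$, via the minimal $\lambda$ with ${\bf x}+\lambda{\bf h}\geq 0$ and the sign contradiction at a coordinate touching zero), apply it to the columns of $\Delta^{-1}$, and obtain $A_{ii}>0$ from the strict positivity of principal minors (Proposition \ref{laplace-matrix-topple-property}(2)); all of these steps are sound, including the existence of the minimizing $\lambda$ and the identification $A_{ij}=(\det\Delta)(\Delta^{-1})_{ij}$. Your route is more elementary and self-contained: it needs only the defining vector ${\bf h}>0$ with $\Delta{\bf h}>0$ and the principal-minors fact, not the existence of recurrent configurations nor the finiteness and commutativity of avalanches on which the paper's argument implicitly rests. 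What the paper's route buys in exchange is a combinatorial interpretation of the cofactors: the $i$-th row of $\mathrm{adj}(\Delta)$ is, up to the positive scalar $c_i/\det\Delta$, the vector recording how many times each site topples in the avalanche triggered at site $i$, which explains directly why $A_{ii}$ must be strictly positive while the off-diagonal entries need only be nonnegative.
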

\begin{proof} Suppose that $\det \Delta>0$,
$A_{ii}> 0$ and $A_{ij}\geq 0$ for any $i\neq j$. Let ${\bf
h}=adj(\Delta){\bf 1}^T$. Then $${\bf h}>0\text{ and }\Delta {\bf
h}=\Delta adj(\Delta){\bf 1}^T=(\det \Delta) {\bf 1}^T>0.$$  Hence,
$\Delta$ is a toppling matrix.

Conversely, suppose $\Delta$ is a toppling matrix. Proposition
\ref{laplace-matrix-topple-property}(2) implies $\det \Delta>0$ and
there is a recurrent configuration ${\bf u}$. It follows from the
definition of recurrent configurations that for every $i$ there is a
positive integer  $c_i$ such that $A^{c_i}_i{\bf u} = {\bf u}$.
 This means that the configuration ${\bf u}+c_i{\bf e}_i$ can be transformed into
 ${\bf u}$
by a sequence of topplings, where ${\bf e}_i$ is a row vector of
length $n$ in which  the $i$-th coordinate has value $1$ and the
other coordinate has value $0$. Suppose that representation vector
for the sequence of topplings is
   $${\bf r}_i=(r_{i1},\ldots,r_{in}).$$ Then ${ r}_{ii}\geq 1, { r}_{ij}\geq 0$ for $i\neq j$ and $${\bf r}_i\Delta=c_i{\bf
   e}_i.$$ So $${\bf r}_i=\frac{c_i}{\det \Delta}{\bf
   e}_i adj(\Delta)=\frac{c_i}{\det
   \Delta}(A_{i1},\cdots,A_{in}).$$ Hence, $A_{ii}> 0$ and $A_{ij}\geq 0$ for any $i\neq j$.
\end{proof}

Now, we always let $\Delta$ be an $n\times n$ integer matrix
satisfying the condition in  (\ref{condition-matrix}).

\begin{lem}\label{lem-parking-recurrent-same} For any ${\bf r}\in\mathscr{R}(\Delta)$,  a configuration ${\bf u}$ is a $(\Delta,{\bf r})$-recurrent configuration if
and only if ${\bf d}-{\bf u}$ is a $(\Delta,{\bf r})$-parking
function.
\end{lem}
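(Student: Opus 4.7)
\bigskip
\noindent\emph{Proof proposal.} The plan is to reduce the statement to Lemma \ref{parking-sequence} by a direct bookkeeping of a toppling sequence that takes ${\bf u}+{\bf r}\Delta$ down to ${\bf u}$. First I would set up the basic dictionary: the stability condition $0\le u_i<\Delta_{ii}$ is equivalent to $0\le d_i-u_i\le d_i$, i.e.\ to $f:={\bf d}-{\bf u}$ taking values in $\{0,1,\ldots,d_i\}$ at index $i$. On the other side, any $(\Delta,{\bf r})$-parking function $f$ already satisfies $0\le f(i)<\Delta_{ii}$ (apply the parking condition to $\chi={\bf e}_i$, which lies in $\Omega({\bf r})$ because $r_i\ge 1$), so $f\mapsto {\bf d}-f$ is a well-defined involution between stable configurations and non-negative integer vectors bounded componentwise by ${\bf d}$.

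Next I would analyze the structure of any toppling sequence carrying ${\bf u}+{\bf r}\Delta$ to ${\bf u}$. If such a sequence has representation vector ${\bf s}$, then ${\bf u}+{\bf r}\Delta-{\bf s}\Delta={\bf u}$, which forces ${\bf s}={\bf r}$ by invertibility of $\Delta$. Hence the toppling sequence is precisely a listing $\pi(1),\ldots,\pi(m)$ of the multiset $V({\bf r})$ (with $m=\sum r_i$). Writing $\chi_i$ for the characteristic function of $\{\pi(i),\pi(i+1),\ldots,\pi(m)\}$, the already-toppled vertices are encoded by ${\bf r}-\chi_i$, so the configuration immediately before step $i$ equals
$$
{\bf u}+{\bf r}\Delta-({\bf r}-\chi_i)\Delta = {\bf u}+\chi_i\Delta.
$$
The toppling at step $i$ is legal iff $\pi(i)$ is critical there, that is,
$$
u_{\pi(i)}+\langle\chi_i,\Delta^{\pi(i)}\rangle\ \ge\ \Delta_{\pi(i),\pi(i)},
$$
which rewrites as $\langle\chi_i,\Delta^{\pi(i)}\rangle>d_{\pi(i)}-u_{\pi(i)}=f(\pi(i))$. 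Combined with $f(\pi(i))\ge 0$ from stability, this is exactly the condition appearing in Lemma \ref{parking-sequence}.

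With this translation in hand, both directions are immediate: if ${\bf u}$ is $(\Delta,{\bf r})$-recurrent, pick a toppling sequence of ${\bf u}+{\bf r}\Delta$ ending at ${\bf u}$ (which exists by Definition \ref{r-recurrent}), read off the sequence $\pi$ of vertices, and invoke Lemma \ref{parking-sequence} to conclude that ${\bf d}-{\bf u}$ is a $(\Delta,{\bf r})$-parking function. Conversely, given that ${\bf d}-{\bf u}$ is a $(\Delta,{\bf r})$-parking function, Lemma \ref{parking-sequence} produces a sequence $\pi(1),\ldots,\pi(m)$ of vertices in $V({\bf r})$ satisfying the inequalities above; by the computation $({\bf u}+\chi_i\Delta)_{\pi(i)}\ge\Delta_{\pi(i),\pi(i)}$, this sequence is a valid sequence of topplings starting at ${\bf u}+{\bf r}\Delta$, and after all $m$ steps the configuration is ${\bf u}+{\bf r}\Delta-{\bf r}\Delta={\bf u}$, which is stable; hence ${\bf u}$ is $(\Delta,{\bf r})$-recurrent.

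I expect the main (mild) obstacle to be verifying that the rewriting of criticality as $f(\pi(i))<\langle\chi_i,\Delta^{\pi(i)}\rangle$ is lined up with the exact form of Lemma \ref{parking-sequence}, and confirming that the stability half of the equivalence is fully accounted for by the choice $\chi={\bf e}_i$ in Definition \ref{definition}. Once these small alignments are checked, the equivalence is essentially a bookkeeping identity between chip-firing and the sequential characterization of parking functions.
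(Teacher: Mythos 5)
Your proposal is correct and follows essentially the same route as the paper: identify the toppling sequence of ${\bf u}+{\bf r}\Delta$ with a listing of the multiset $V({\bf r})$, translate criticality at each step into $f(\pi(i))<\langle\chi_i,\Delta^{\pi(i)}\rangle$, and invoke Lemma \ref{parking-sequence} in both directions. Your version is in fact slightly more careful than the paper's, since you explicitly verify that the representation vector must equal ${\bf r}$ (via $\det\Delta\neq 0$) and that stability of ${\bf u}$ in the converse direction follows from applying the parking condition to $\chi={\bf e}_i$.
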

\begin{proof} Let $m=m({\bf
r})=\sum\limits_{j=1}^nr_j$. Suppose that ${\bf u}$ is a
$(\Delta,{\bf r})$-recurrent configuration. By Definition
\ref{r-recurrent}, the configuration $u+{\bf r}\Delta$ can be
transformed into
 $u$
by a sequence $i_1,i_2,\ldots,i_m$ of topplings. Note that ${\bf r}$
is the representation vector for the sequence $i_1,i_2,\ldots,i_m$.
For every $j\in\{1,2,\ldots,m\}$, let  $\chi_j$ be the
characteristic function of the multiset
$\{i_j,i_{j+1},\ldots,i_m\}$. Then we have
$$u_{i_j}+\sum\limits_{k=j}^m\Delta_{i_k,i_j}\geq \Delta_{i_j,i_j}$$
and $$({\bf d}-{\bf u})_{i_j}=\Delta_{i_j,i_j}-1-u_{i_j}\leq
\sum\limits_{k=j}^m\Delta_{i_k,i_j}-1=\langle\chi_j,\Delta^{i_j}\rangle-1<\langle\chi_j,\Delta^{i_j}\rangle.$$
It follows from Lemma \ref{parking-sequence} that ${\bf d}-{\bf u}$
is a $(\Delta,{\bf r})$-parking function.

Conversely, suppose $f={\bf d}-{\bf u}$ is a $(\Delta,{\bf
r})$-parking function. By Proposition \ref{parking-sequence}, there
is a sequence of vertices in $V( {\bf r})$
$$\pi(1),\ldots,\pi(m)$$ such that for every $i\in\{1,2,\ldots,m\}$  $$0\leq f(\pi(i))<\langle\chi_i,\Delta^{\pi(i)}\rangle$$
where   $\chi_i$ is the characteristic function of
$\{\pi(i),\pi(i+1),\ldots,\pi(m)\}.$ So,
\begin{eqnarray*}u_{\pi(i)}&=&\Delta_{\pi(i),\pi(i)}-1-f(\pi(i))\\
&>&\Delta_{\pi(i),\pi(i)}-1-\langle\chi_i,\Delta^{\pi(i)}\rangle\\
&=&\Delta_{\pi(i),\pi(i)}-1-\sum\limits_{k=i}^m\Delta_{\pi(k),\pi(i)}\end{eqnarray*}
and $$u_{\pi(i)}+\sum\limits_{k=i}^m\Delta_{\pi(k),\pi(i)}\geq
\Delta_{\pi(i),\pi(i)}.$$ This implies that  ${\bf u}+{\bf r}\Delta$
can be transformed into
 ${\bf u}$
by the sequence $\pi(1),\pi(2),\ldots,\pi(m)$ of topplings.
\end{proof}

\begin{thm}\label{thm-r-recurrent-set-same}For any ${\bf r}\in\mathscr{R}(\Delta)$,
$\mathcal{R}(\Delta,{\bf r})= \mathcal{R}(\Delta,{\bf r}')$.
\end{thm}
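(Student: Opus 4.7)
The plan is to deduce this theorem almost immediately by combining Lemma \ref{lem-parking-recurrent-same} with Theorem \ref{thm-r-parking-set-same}. The key observation is that the map $\varphi:{\bf u}\mapsto {\bf d}-{\bf u}$ is an involution on $\mathbb{Z}^n$ that does not depend on ${\bf r}$, so if this involution carries $\mathcal{R}(\Delta,{\bf r})$ bijectively onto $\mathcal{P}(\Delta,{\bf r})$ for each admissible ${\bf r}$, and the target $\mathcal{P}(\Delta,{\bf r})$ is known to be independent of ${\bf r}$, then the source $\mathcal{R}(\Delta,{\bf r})$ must also be independent of ${\bf r}$.

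Concretely, I would proceed in three short steps. First, fix ${\bf r},{\bf r}'\in\mathscr{R}(\Delta)$ and take an arbitrary ${\bf u}\in\mathcal{R}(\Delta,{\bf r})$. By Lemma \ref{lem-parking-recurrent-same} applied to ${\bf r}$, the function ${\bf d}-{\bf u}$ lies in $\mathcal{P}(\Delta,{\bf r})$. Second, by Theorem \ref{thm-r-parking-set-same}, $\mathcal{P}(\Delta,{\bf r})=\mathcal{P}(\Delta,{\bf r}')$, so ${\bf d}-{\bf u}\in\mathcal{P}(\Delta,{\bf r}')$. Third, invoking Lemma \ref{lem-parking-recurrent-same} in the reverse direction for ${\bf r}'$, the configuration ${\bf u}={\bf d}-({\bf d}-{\bf u})$ is a $(\Delta,{\bf r}')$-recurrent configuration, i.e., ${\bf u}\in\mathcal{R}(\Delta,{\bf r}')$. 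Thus $\mathcal{R}(\Delta,{\bf r})\subseteq\mathcal{R}(\Delta,{\bf r}')$, and by symmetry the reverse inclusion holds.

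I should briefly check that Lemma \ref{lem-parking-recurrent-same} is genuinely a statement about both stability and the parking condition (so the output ${\bf d}-{\bf u}$ being a nonnegative function corresponds exactly to ${\bf u}$ being a stable configuration with $0\leq u_i\leq \Delta_{ii}-1$), which guarantees that the involution $\varphi$ really does biject the two sets. There is no substantive obstacle here: the entire content has been done in the preceding lemma and theorem, and Theorem \ref{thm-r-recurrent-set-same} is a direct corollary in one or two lines.
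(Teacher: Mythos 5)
Your proposal is correct and follows exactly the paper's own route: the paper proves this theorem in one line by combining Lemma \ref{lem-parking-recurrent-same} with Theorem \ref{thm-r-parking-set-same}, which is precisely the two-way application of the involution ${\bf u}\mapsto {\bf d}-{\bf u}$ that you spell out. Your added remark that the lemma encodes both stability and the parking condition is a sensible sanity check, but no new idea is needed beyond what the paper already uses.
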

\begin{proof} The required results follows from Lemma
\ref{lem-parking-recurrent-same} and Theorem
\ref{thm-r-parking-set-same}.
\end{proof}

Theorem \ref{thm-r-recurrent-set-same} tells us that the set of
$(\Delta,{\bf r})$-recurrent configurations is independent of ${\bf
r}$ for any ${\bf r}\in\mathscr{R}(\Delta)$. So, $(\Delta,{\bf
r})$-parking functions are simply called $\Delta$-recurrent
configurations.

\begin{lem}\label{r-recurrent-coset}Let ${\bf r}\in\mathscr{R}(\Delta)$. For any  integer vector ${\bf v} = (v_1, \ldots, v_n)$, there
exists a $(\Delta,{\bf r})$-recurrent configuration ${\bf u}$ such
that ${\bf v}-{\bf u}\in \langle\Delta\rangle$.
\end{lem}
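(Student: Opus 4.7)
The plan is to locate a $(\Delta,{\bf r})$-recurrent configuration inside the coset ${\bf v}+\langle\Delta\rangle$ by iteratively stabilizing a growing family of configurations in this coset and then invoking pigeonhole on the finite set of stable configurations.

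The first move is to produce an auxiliary vector ${\bf r}'\in\mathscr{R}(\Delta)$ that is strictly positive after multiplication by $\Delta$. Since $\Delta$ satisfies~(\ref{condition-matrix}), the first proposition of Section~3 says $\Delta$ is a toppling matrix, and Proposition~\ref{laplace-matrix-topple-property}(1) then gives that $\Delta^{T}$ is also a toppling matrix, so there is an integer column vector ${\bf h}>0$ with $\Delta^{T}{\bf h}>0$; setting ${\bf r}':={\bf h}^{T}$ yields ${\bf r}'>0$ and ${\bf r}'\Delta>0$ in every coordinate. Consequently, for all sufficiently large integers $M$, the vector ${\bf w}_{M}:={\bf v}+M{\bf r}'\Delta$ is componentwise nonnegative (hence a configuration) and satisfies ${\bf w}_{M}-{\bf v}=M{\bf r}'\Delta\in\langle\Delta\rangle$.

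Next, I would let ${\bf u}_{M}$ denote the stabilization of ${\bf w}_{M}$, which exists by Proposition~\ref{topple-into-stable}. Since each toppling subtracts a row of $\Delta$, we retain ${\bf u}_{M}\equiv{\bf v}\pmod{\langle\Delta\rangle}$. Using the abelian property, I would argue the recursion ${\bf u}_{M+1}=\mathrm{stab}({\bf u}_{M}+{\bf r}'\Delta)$: any toppling sequence carrying ${\bf w}_{M}$ to ${\bf u}_{M}$ remains legal inside ${\bf w}_{M+1}={\bf w}_{M}+{\bf r}'\Delta\geq{\bf w}_{M}$, because raising entries preserves the criticality threshold $u_{i}\geq\Delta_{ii}$; executing it inside ${\bf w}_{M+1}$ lands at ${\bf u}_{M}+{\bf r}'\Delta$, whose further stabilization must coincide with ${\bf u}_{M+1}$ by the uniqueness clause of Proposition~\ref{topple-into-stable}. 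Iterating, ${\bf u}_{M+k}=\mathrm{stab}({\bf u}_{M}+k{\bf r}'\Delta)$ for every $k\geq 0$.

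Finally, every ${\bf u}_{M}$ lies in the finite set of stable configurations $\{{\bf u}:0\leq u_{i}<\Delta_{ii}\}$, so pigeonhole supplies some $M_{0}$ and some $p\geq 1$ with ${\bf u}_{M_{0}+p}={\bf u}_{M_{0}}$. The iterated identity then gives $\mathrm{stab}({\bf u}_{M_{0}}+p{\bf r}'\Delta)={\bf u}_{M_{0}}$, so ${\bf u}_{M_{0}}$ is a $(\Delta,p{\bf r}')$-recurrent configuration. Since $p{\bf r}'\in\mathscr{R}(\Delta)$, Theorem~\ref{thm-r-recurrent-set-same} upgrades this to $(\Delta,{\bf r})$-recurrence, and the coset preservation noted above gives ${\bf u}_{M_{0}}-{\bf v}\in\langle\Delta\rangle$, so ${\bf u}:={\bf u}_{M_{0}}$ is the desired configuration. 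The main obstacle is the bookkeeping behind the recursion and its iterate; once the abelian property is invoked cleanly, finiteness of the stable configurations together with the earlier independence theorem finish the argument.
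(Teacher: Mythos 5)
Your proposal is correct and follows essentially the same route as the paper: shift ${\bf v}$ into the nonnegative orthant by an element of $\langle\Delta\rangle$, repeatedly add a fixed vector of the form ${\bf s}\Delta$ with ${\bf s}\in\mathscr{R}(\Delta)$ and stabilize, apply pigeonhole on the finitely many stable configurations to find a recurring one, and transfer $(\Delta,p{\bf s})$-recurrence to $(\Delta,{\bf r})$-recurrence via the independence results. The only (harmless) differences are that the paper shifts by $k(\det\Delta){\bf 1}=k({\bf 1}\,adj(\Delta))\Delta$ and iterates with the given ${\bf r}$ itself, whereas you construct an auxiliary ${\bf r}'$ with ${\bf r}'\Delta>0$ and spell out the abelian-property bookkeeping that the paper leaves implicit.
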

\begin{proof} Note that $\det \Delta>0$  and $(\det \Delta){\bf 1}=({\bf 1}adj(\Delta))\Delta\in\langle\Delta\rangle$. For any
integer vector ${\bf v} = (v_1, \ldots, v_n)$, there exists a
positive integer $k$ such that ${\bf v}+k(\det \Delta){\bf 1}>0$. It
is sufficient to prove for any  configuration ${\bf v} = (v_1,
\ldots, v_n)$, there exists a $(\Delta,{\bf r})$-recurrent
configuration ${\bf u}$ such that ${\bf v}-{\bf u}\in
\langle\Delta\rangle$.

We now suppose ${\bf v}$ is a configuration. By Proposition
\ref{topple-into-stable},  we  start from ${\bf v}$, increase $v_i$
by $({\bf r}\Delta)_i$ for all $i\in\{1,2,\cdots,n\}$ and then
transform ${\bf v}+{\bf r}\Delta$ into a stable configuration by a
sequence of topplings. If we repeat the process, we shall reach
another stable configuration. This procedure can be repeated as
often as we please, whereas the number of stable configurations is
finite. So at least one of them must recur. This means that there
exists a stable configuration ${\bf u}$ for which ${\bf
u}+b\cdot{\bf r}\Delta$ can be transformed into
 ${\bf u}$
by a sequence of topplings. Hence, ${\bf u}$ is a $(\Delta,b{\bf
r})$-recurrent configuration. By Corollary \ref{parking-br-ar+br}
and Lemma \ref{lem-parking-recurrent-same}, we have ${\bf u}$ is a
$(\Delta,{\bf r})$-recurrent configuration and ${\bf u}-{\bf
v}\in\langle\Delta\rangle$.
\end{proof}

Lemma \ref{r-recurrent-coset} implies that every equivalent class of
$\mathbb{Z}^n$ contains at least one $\Delta$-recurrent
configuration. So we have the following corollary.

\begin{cor}\label{r-recurrent->=det} The number of $\Delta$-recurrent configuration is larger than or equal to $  \det \Delta$.
\end{cor}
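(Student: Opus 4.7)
The plan is to deduce the inequality directly from Lemma \ref{r-recurrent-coset} by a counting argument on cosets of the sublattice $\langle\Delta\rangle$.

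First I would fix any ${\bf r}\in\mathscr{R}(\Delta)$, which exists by the third condition of (\ref{condition-matrix}). Then I would consider the natural projection map
$$\pi:\mathcal{R}(\Delta)\longrightarrow \mathbb{Z}^n/\langle\Delta\rangle,\qquad {\bf u}\longmapsto {\bf u}+\langle\Delta\rangle.$$
The content of Lemma \ref{r-recurrent-coset} is precisely that for every integer vector ${\bf v}\in\mathbb{Z}^n$ there is some $\Delta$-recurrent configuration ${\bf u}$ with ${\bf v}-{\bf u}\in\langle\Delta\rangle$, i.e.\ ${\bf v}+\langle\Delta\rangle={\bf u}+\langle\Delta\rangle$. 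In other words, every coset of $\langle\Delta\rangle$ in $\mathbb{Z}^n$ contains at least one element of $\mathcal{R}(\Delta)$, so $\pi$ is surjective.

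From surjectivity of $\pi$ we immediately get $|\mathcal{R}(\Delta)|\geq |\mathbb{Z}^n/\langle\Delta\rangle|$. Since $\Delta$ is a nonsingular integer matrix and $\langle\Delta\rangle$ is the sublattice spanned by the rows of $\Delta$, the standard fact from lattice theory gives $|\mathbb{Z}^n/\langle\Delta\rangle|=|\det \Delta|=\det \Delta$, where the last equality uses that $\det\Delta>0$ (this was established in the proof of the proposition preceding Lemma \ref{det>=0}). Combining these two facts yields $|\mathcal{R}(\Delta)|\geq \det\Delta$, as required.

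There is essentially no obstacle left at this stage: all the hard work is contained in Lemma \ref{r-recurrent-coset}, whose proof invokes Proposition \ref{topple-into-stable} together with a pigeonhole argument on stable configurations, and in Theorem \ref{thm-r-recurrent-set-same}, which ensures that the set $\mathcal{R}(\Delta)$ is well-defined (independent of the choice of ${\bf r}$). The corollary itself is only the final bookkeeping step translating the ``every coset is hit'' statement into a cardinality inequality.
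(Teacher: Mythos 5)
Your proposal is correct and follows exactly the paper's own argument: the paper likewise combines Lemma \ref{r-recurrent-coset} (every coset of $\langle\Delta\rangle$ contains a $\Delta$-recurrent configuration) with the fact that $|\mathbb{Z}^n/\langle\Delta\rangle|=\det\Delta$ to conclude $|\mathcal{R}(\Delta)|\geq\det\Delta$. You have merely spelled out the surjectivity of the projection map more explicitly than the paper does.
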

\begin{proof} Since  the order of the quotient of
the integer  lattice $\mathbb{Z}^n/\langle \Delta\rangle$ is $\det
\Delta$, it follows from Lemma \ref{r-recurrent-coset} and Theorem
\ref{thm-r-recurrent-set-same} that $|\mathcal{R}(\Delta)|\geq \det
\Delta$.
\end{proof}

\begin{thm} $|\mathcal{P}(\Delta)|=|\mathcal{R}(\Delta)|= \det \Delta$.
\end{thm}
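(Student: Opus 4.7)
The plan is to chain together the three ingredients that have already been assembled: the upper bound on $|\mathcal{P}(\Delta)|$, the lower bound on $|\mathcal{R}(\Delta)|$, and the explicit bijection between them. No new combinatorial argument will be required.

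First I would invoke Lemma \ref{lem-parking-recurrent-same}, fixing any ${\bf r}\in\mathscr{R}(\Delta)$ (such an ${\bf r}$ exists by the defining condition (\ref{condition-matrix})). The lemma says that the map ${\bf u}\mapsto {\bf d}-{\bf u}$ sends $(\Delta,{\bf r})$-recurrent configurations to $(\Delta,{\bf r})$-parking functions, and that every $(\Delta,{\bf r})$-parking function is hit by precisely the preimage $f\mapsto {\bf d}-f$. Hence this is an honest bijection between $\mathcal{R}(\Delta,{\bf r})$ and $\mathcal{P}(\Delta,{\bf r})$. Since Theorems \ref{thm-r-parking-set-same} and \ref{thm-r-recurrent-set-same} identify these sets with $\mathcal{P}(\Delta)$ and $\mathcal{R}(\Delta)$ independently of the choice of ${\bf r}$, we obtain the equality $|\mathcal{P}(\Delta)|=|\mathcal{R}(\Delta)|$.

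Next I would combine this equality with the two a~priori bounds: Corollary \ref{parking-<=det} gives $|\mathcal{P}(\Delta)|\le \det\Delta$, and Corollary \ref{r-recurrent->=det} gives $|\mathcal{R}(\Delta)|\ge \det\Delta$. Putting these together,
\[
\det\Delta \;\le\; |\mathcal{R}(\Delta)| \;=\; |\mathcal{P}(\Delta)| \;\le\; \det\Delta,
\]
which forces both cardinalities to equal $\det\Delta$.

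There is no real obstacle here; all the work is already done. The only thing to notice is that both bounds are compared against the same quantity $\det\Delta$ (which is positive by Lemma \ref{det>=0} applied with $I=\{1,\ldots,n\}$, together with the assumption $\det\Delta\ne 0$), so the bijection is exactly what is needed to collapse the sandwich. Thus the theorem follows in one line after citing the three previous results.
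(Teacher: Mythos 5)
Your proposal is correct and follows exactly the paper's argument: the paper likewise combines Corollary \ref{parking-<=det}, Corollary \ref{r-recurrent->=det}, the bijection of Lemma \ref{lem-parking-recurrent-same}, and Theorem \ref{thm-r-recurrent-set-same} to sandwich both cardinalities against $\det\Delta$. Nothing is missing.
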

\begin{proof}Combining Corollaries \ref{parking-<=det},
\ref{r-recurrent->=det}, Lemma \ref{lem-parking-recurrent-same} and
Theorem \ref{thm-r-recurrent-set-same}, we have
$|\mathcal{P}(\Delta)|=|\mathcal{R}(\Delta)|= \det \Delta$.
\end{proof}

Note that recurrent configurations for a toppling matrix $\Delta$
are exactly $\Delta$-recurrent configurations. Let ${\bf
r}\in\mathscr{R}(\Delta)$. We say that a configuration ${\bf
u}=(u_1,u_2,\cdots,u_n)$ is ${\bf r}$-allowed if for any $\chi\in
\Omega(\Delta,{\bf r})$, there exists a vertex $j$ with $\chi(j)\geq
1$ such that
$$u_j\geq \Delta_{j,j}-\langle\chi,\Delta^j\rangle.$$

\begin{cor} Let ${\bf
r}\in\mathscr{R}(\Delta)$. A configuration ${\bf u}$ is a recurrent
configuration if and only if it is stable and ${\bf r}$-allowed.
\end{cor}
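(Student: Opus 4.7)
The plan is to reduce the corollary to the bijection between $(\Delta,{\bf r})$-recurrent configurations and $(\Delta,{\bf r})$-parking functions already established in Lemma~\ref{lem-parking-recurrent-same}. Since recurrent configurations for a toppling matrix $\Delta$ coincide with $\Delta$-recurrent configurations (as noted immediately before the corollary) and, by Theorem~\ref{thm-r-recurrent-set-same}, $\mathcal{R}(\Delta)=\mathcal{R}(\Delta,{\bf r})$ for every ${\bf r}\in\mathscr{R}(\Delta)$, it suffices to prove that a configuration ${\bf u}$ lies in $\mathcal{R}(\Delta,{\bf r})$ if and only if ${\bf u}$ is stable and ${\bf r}$-allowed.

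First I would set $f={\bf d}-{\bf u}$, where ${\bf d}=(\Delta_{11}-1,\ldots,\Delta_{nn}-1)$, and rewrite the two defining conditions of a $(\Delta,{\bf r})$-parking function in terms of ${\bf u}$. The nonnegativity requirement $f(j)\geq 0$ for all $j$ becomes $u_j\leq \Delta_{jj}-1$ for all $j$, which combined with $u_j\geq 0$ (since ${\bf u}$ is a configuration) is exactly the stability of ${\bf u}$. Next, the strict inequality $f(j)<\langle\chi,\Delta^j\rangle$ rearranges to
\[
u_j\;>\;\Delta_{jj}-1-\langle\chi,\Delta^j\rangle,
\]
which, since all quantities are integers, is equivalent to $u_j\geq\Delta_{jj}-\langle\chi,\Delta^j\rangle$. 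Thus the selection clause in Definition~\ref{definition}, namely that for every $\chi\in\Omega({\bf r})$ some vertex $j$ with $\chi(j)\geq 1$ satisfies $0\leq f(j)<\langle\chi,\Delta^j\rangle$, translates, once stability is assumed, into the condition that for every $\chi\in\Omega({\bf r})$ there is a $j$ with $\chi(j)\geq 1$ such that $u_j\geq\Delta_{jj}-\langle\chi,\Delta^j\rangle$; this is exactly the definition of ${\bf r}$-allowed.

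Combining these two translations, ${\bf d}-{\bf u}$ is a $(\Delta,{\bf r})$-parking function if and only if ${\bf u}$ is both stable and ${\bf r}$-allowed. Applying Lemma~\ref{lem-parking-recurrent-same} converts the left-hand side into the statement that ${\bf u}\in\mathcal{R}(\Delta,{\bf r})=\mathcal{R}(\Delta)$, which in turn, by the identification of recurrent configurations with $\Delta$-recurrent configurations, is the statement that ${\bf u}$ is recurrent in the sandpile model.

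There is no real obstacle here; the only point needing a little care is the passage between the strict inequality $f(j)<\langle\chi,\Delta^j\rangle$ and the non-strict inequality $u_j\geq\Delta_{jj}-\langle\chi,\Delta^j\rangle$, which uses integrality, and the observation that the ``${\bf u}$ is a configuration'' assumption ($u_j\geq 0$) is what upgrades the inequality $f(j)\leq\Delta_{jj}-1$ to full stability. Everything else is a bookkeeping chain of previously proved equivalences.
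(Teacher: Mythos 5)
Your proof is correct and follows exactly the route the paper intends: the corollary is stated without an explicit proof precisely because it is the composition of the identification of recurrent configurations with $\Delta$-recurrent configurations, Lemma~\ref{lem-parking-recurrent-same}, Theorem~\ref{thm-r-recurrent-set-same}, and the integrality translation $f(j)<\langle\chi,\Delta^j\rangle \Leftrightarrow u_j\geq\Delta_{jj}-\langle\chi,\Delta^j\rangle$ that matches Definition~\ref{definition} with the definition of ${\bf r}$-allowed. Your careful handling of where stability enters (absorbing the $f(j)\geq 0$ requirement) is exactly the bookkeeping the paper leaves implicit.
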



\begin{thebibliography}{99}

\bibitem{chebikin}D. Chebikin, P. Pylyavskyy, A Family of Bijections Between
G-Parking Functions and Spanning Trees, Journal of Combinatorial
Theory A, 110 (2005), no. 1, 31-41.

\bibitem{cori}R. Cori, D. Poulalhon, Enumeration of $(p, q)$-Parking
Functions, Discrete Math., 256 (2002), 609-623.

\bibitem{cori2}R. Cori, D. Rossin, B. Salvy, Polynomial ideals for sandpiles and
their Gr\"{o}bner bases, Theoretical Computer Science, 276 (2002),
no. 1-2, 1-15.

\bibitem{dhar}D. Dhar, Self-organised critical state of the sandpile automaton
models, Physical Review Letters, 64 (1990), no. 14, 1613-1616.

\bibitem{gabrielov1}A. Gabrielov, Abelian avalanches and Tutte polynomials, Physica
A,
195 (1993), 253-274.

\bibitem{gabrielov2}A. Gabrielov, Asymmetric abelian avalanches and sandpile,
preprint 93-65, MSI, Cornell University, 1993.

\bibitem{ivashkevich}E. V. Ivashkevich, V. B. Priezzhev, Introduction to the sandpile
model, Physica A, 254 (1998), 97-116.

\bibitem{konheim} A.G. Konheim, B. Weiss, An occupancy discipline and applications,
SIAM J. Appl. Math., 14 (1966), 1266-1274.

\bibitem{kung}J. P. Kung, C. H. Yan, Gon\u{c}arov polynomials and parking
functions, Journal of Combinatorial Theory, Series A, 102(2003),
16-37.

\bibitem{meester}R. Meester, F. Redig, D. Znamenski, The abelian sandpile: a
mathematical introduction, Markov Process. Related Fields 7 (2001),
no. 4, 509-523.

\bibitem{pitman}J. Pitman, R. P. Stanley, A polytope related to empirical
distributions, plane trees, parking functions, and the associ-
ahedron, Discrete and Computational Geometry, 27(2002), no.4,
603-634.

\bibitem{postnikov}A. Postnikov, B. Shapiro, Trees, Parking Functions, Syzygies, and
Deformations of Monomial Ideals, Transactions of the American
Mathematical Society, 356 (2004),3109-3142.

\bibitem{riordan}J. Riordan, Ballots and trees, J. Combin. Theory, 6 (1969), 408-411.

\bibitem{stanley}R. P. Stanley, Hyperplane arrangements, parking functions, and tree
inversions, in B. Sagan and R. Stanley, eds., ``Mathematical essays
in honor of Gian-Carlo Rota," Birkh\"{a}user, Boston and Basel,
1998, pp. 359-375.

\bibitem{yan1997}C. H. Yan, Generalized tree inversions and k-parking functions.
Journal of Combinatorial Theory, Series A, 79(1997), 268-280.

\bibitem{yan2001}C. H. Yan, Generalized Parking Functions, Tree Inversions, and
Multicolored Graphs, Advances in Applied Mathematics, 27 (2001),
641-670.
\end{thebibliography}
\end{document}